\newtheorem{theorem}{Theorem}[section]
\newtheorem{lemma}[theorem]{Lemma}
\tikzset{%
    symbol/.style={%
        draw=none,
        every to/.append style={%
            edge node={node [sloped, allow upside down, auto=false]{$#1$}}}
    }
}
\theoremstyle{definition}
\newtheorem{definition}[theorem]{Definition}
\newtheorem{example}[theorem]{Example}
\theoremstyle{remark}
\newtheorem{remark}[theorem]{Remark}
\numberwithin{equation}{section}
\begin{document}
\title[DCGCA and Vector Bundles with Inner Structures]{Differential Calculus over Graded Commutative Algebras
and Vector Bundles with Inner Structures}

\author{Jacob Kryczka}
\address{Yanqi Lake Beijing Institute of Mathematical Sciences and Applications, Beijing, China}

\email{jkryczka@bimsa.cn}

\subjclass[2020]{Primary 58A99, 53A55, 53C99; Secondary 13N99, 53C80, 55R15}
\date{August 31, 2022}

\dedicatory{This paper is dedicated to the memory of my teacher, Alexander M. Vinogradov.}

\keywords{Differential calculus in graded algebras, Diolic algebras, Triolic algebras, Gauge structure}

\begin{abstract}
In this note we highlight a common origin for many ubiquitous geometric structures, as well as several new ones by using only the functors of differential calculus in A.M Vinogradov's original sense, adapted to special classes of (graded) commutative algebras. Special attention is given to the particularly simple cases of diole and triole algebras and we show the latter environment is the appropriate one to describe calculus in vector bundles in the presence of a vector-valued fiber metric.
\end{abstract}

\maketitle
\tableofcontents
\specialsection*{Introduction}
It is common knowledge nowadays that one may study the geometric datum of a vector bundle $E\rightarrow M$ over a smooth manifold in terms of algebraic data via its $A:=C^{\infty}(M)$-module of smooth sections. What is not so commonly encountered in the mathematical literature is the idea that the standard differential geometry of vector bundles and the calculus of their linear connections appears as a particular aspect of differential calculus over commutative algebras in the category of geometric\footnote{We do not recall their definition here, instead referring to \cite[{\normalfont\S 0.3.2}]{VinFat} or \cite[{\normalfont\S 11.11}]{Nestruev} for details.} modules over smooth function
algebras \cite{Nestruev,VinFat} (see also the Second Edition \cite{Nestruev2}).

The expressive capacity and universality of this approach, first appearing in \cite{VinLogic}, is apparent in that it establishes, among other things, a common language for the theory of integration interpreted in terms of de
Rham cohomology, as well as traces of endomorphisms that permit a unified description of the cohomology theories associated to flat connections.

Yet another attractive feature of this functorial and algebraic approach is that it offers a direct algorithm for developing both algebraic
Hamiltonian mechanics and Lagrangian mechanics in a variety of contexts, for instance, over graded commutative algebras, by passing to notions of graded
derivations, graded differential operators etc. \cite{KraVin,Ver}.

This generalization to the graded context has proven to be very powerful, for instance it has been used to deduce a conceptual meaning of Riemannian geometry and the calculus of tensor fields in the context of differential calculus over graded commutative algebras via the notion of iterated differential forms \cite{IteratedForms}.
\vspace{.5mm}

What is even less known, it seems, is that these facts arise naturally from a mathematical formalization of the notion of \emph{classical observability} on smooth manifolds (configuration spaces) appearing in physics. Unfortunately we have no space to elaborate such ideas, and instead encourage the interested reader to consult the expository article \cite{Zoo} and references therein for explanations. See also the Preface to \cite{Nestruev2} for a concise account.
\vspace{2mm}

\subsubsection{The `Fat Philosophy'}
A.M Vinogradov's (and J. Nestruev's) algebraic approach to observability on smooth manifolds can be extended to describe mechanical systems which possess inner structure, modelled by the differential geometry of fiber and vector bundles, possibly supplied with fiber-wise structures. 

In particular, in \cite{VinFat} the authors propose a simple yet elegant reinterpretation of the notion of a vector bundle in different terms $-$ what they call a \emph{fat manifold} i.e. as a 
 manifold of \emph{fat points} defined to be its fibers. 
 
 This terminology serves to emphasize we are dealing with points that have a non-trivial internal structure (in the case of a vector bundle, it is a linear space).
If single points of a fiber can be observed and consequently distinguished from one another with classical means, then they form a smooth manifold \cite[{\normalfont\S 10.1}]{Nestruev2}. 
Then, the point of view that the natural framework for the theory of linear connections as differential calculus in the category of (graded) modules over a (graded) commutative algebra combines naturally with the idea to treat a vector bundle as a ‘fat’ manifold composed of its ‘fat’ points.

This perspective puts the notions of fiber-wise linear structures on a bundle (e.g. vector fields) on the same footing as ordinary differential geometric structures on manifolds.
Actually, the literature on the subject of fiber-wise linear calculus beyond those operators of first-order is rather scarce, with the definition of a fiber-wise linear differential operator of order $\geq 1$ appearing only recently \cite{VitaglianoFWLDO}.

On the other hand, this formalism allows us to obtain natural differential complexes associated to not only vector bundles, but Lie algebroids, Jacobi algebroids and other interesting objects, that were historically introduced and studied independently and through external techniques and methods.
\vspace{1mm}

It is reasonable to expect that such topics may be treated in a unified point of view and the language of linear connections should form a basis of it. 
Indeed, favourable evidence was provided  within the formalism of diolic calculus \cite{Diole1}, where an analogue of differential forms as skew-symmetric multilinear functions on Der-operators gave an alternative characterization of Der-complexes \cite{Rub01}, which encode interesting  cohomological invariants
of projective modules, for instance Atiyah classes, Chern classes and Chern-Simons invariants. 

Moreover,
several other topics, introduced and studied independently appear to have a common origin within the language of diolic differential calculus. 
Indeed, Lie algebra representations are in bijective correspondence with the the datum of a diolic Lie algebra, Courant-Dorfman algebras correspond to diolic Poisson vertex algebras, and Lie algebroids appear as degree $-1$ Poisson
structures on algebras of dioles\footnote{Alias, \emph{fat Poisson manifolds} in the sense of \cite{VinFat}.}.

\vspace{1mm}

The fat formalism provides only a naive and geometrically demonstrative approach to a more sophisticated algebraic one. 
In fact, it should be considered as an introduction to a more satisfactory one based on the theory of dioles\footnote{Indeed, a general algebraic counterpart for fat manifolds is a pair $(A,P)$ composed of a commutative, unital algebra $A$ and a module $P$ over it, while a diole is a delicate combination of these structures $\EuScript{A}=A\oplus P.$}, trioles and ultimately, $n$-ole algebras that we sketch briefly in this work.

\subsection{Organization of the paper}
\label{ssec: Organization of the Paper}
We discuss a general algebraic formalism for calculus in vector bundles which are supplied with inner structures (Definition \ref{Definition: Inner Structure}), paying special attention to the case when this structure is a vector-valued fiber metric. 
\vspace{.5mm}

In this situation and even more general ones, there are natural group actions acting on the vector-valued forms and the algebraic invariants of these forms under such actions provide geometric invariants.  However, before one should hope to use these invariants, their algebraic structure must be known and a \emph{conceptual} origin and language should be established.
\vspace{.5mm}

The purpose of this paper is to discuss such a general algebraic framework, describe aspects of the algebraic approach to inner structures and extrapolate key \emph{graded} algebraic notions. 

We will find that, without the need to pass to more complicated or external situations, one may deduce an invariant differential calculus of inner structures and their related symmetries by simply finding the `right' context over which to elaborate the functors of differential calculus.

More exactly, we decipher a conceptual meaning of the term \emph{gauge structure}\footnote{This terminology is borrowed from \cite{VinFat}.} in differential geometry in the wider context of differential calculus over \emph{graded commutative} algebras.

For instance, in Theorem \ref{Theorem: Atiyah-Sequence Along Maps} we find an Atiyah-like sequence for operators along maps which generalize to describe higher order differential operators along maps (see Lemma \ref{Theorem: k-th order diolic differential operators along map}). Similarly, we find an analogous sequence for operators which act as symmetries for a vector-valued fiber metric and in Theorem \ref{Theorem: Triolic Differential Operators} describe a class of differential operators which may be suitably interpreted as generalized symmetries for the fiber metric.

\section{Conventions, Notation and Terminologies}
We will freely use the functors of differential calculus \cite{VinLogic},\cite{KraVerb},\cite{Ver} straightforwardly adapted to the setting of arbitrary graded commutative algebras and modules over them. We closely follow the conventions set up in \cite[{\normalfont\S 1.3.1}]{Diole1}, and thus recall only the most basic ingredients here.

\subsection{Differential Calculus over Graded Commutative Algebras}
\label{ssec: DCGCA}
We will deal with categories of $\mathcal{G}$-graded  commutative unital algebras over a fixed ground field $\mathbf{k}$ of characteristic zero, or more generally a graded ring $\mathcal{R}$ over $\mathbf{k}.$ For such a $\mathcal{G}$-graded commutative algebra $\mathcal{A}$, the associated category of $\mathcal{G}$-graded modules over it is denoted as $\mathbf{Mod}^{\mathcal{G}}(\mathcal{A}).$
\vspace{1mm}

The operator of multiplication of elements of an $\mathcal{A}$-module $\mathcal{P}$ by a homogeneous element $a\in \mathcal{A}$ is denoted $a_{\mathcal{P}}$. Then for 
$\mathcal{P}=\bigoplus_{g\in \mathcal{G}}P_g,\mathcal{Q}=\bigoplus_{g\in \mathcal{G}}Q_g\in\mathbf{Mod}^{\mathcal{G}}(\mathcal{A}),$ with $P_g,Q_g\in\mathbf{Mod}(A_0)$, we say that a morphism $\varphi:\mathcal{P}\rightarrow \mathcal{Q}$ is of \emph{degree} $g$ if it is $\mathbf{k}$-linear with $\varphi(P_h)\subseteq Q_{g+h}.$
We always understand the compositions $\varphi\circ a_{\mathcal{P}}$ and $a_{\mathcal{Q}}\circ \varphi$ for $a\in \mathcal{A}$ as maps $\mathcal{P}\rightarrow \mathcal{Q}:$
$$\varphi\circ a_{\mathcal{P}}(p):=\varphi(ap),\hspace{10mm} a_{\mathcal{Q}}\circ \varphi(p):=a\varphi(p),\hspace{2mm} p\in \mathcal{P}.$$
Let $\mathrm{Hom}^{\mathcal{G}}(\mathcal{P},\mathcal{Q}):=\bigoplus_g\mathrm{Hom}_{\mathcal{R}}^g(\mathcal{P},\mathcal{Q}),$ and $\mathrm{Hom}_{\mathcal{R}}^g(\mathcal{P},\mathcal{Q})=\{\varphi:\mathcal{P}\rightarrow \mathcal{Q}|\varphi(\mathcal{P}_{h})\subseteq \mathcal{Q}_{h+g},h\in G\}.$
In $\text{Hom}_{\mathcal{R}}\big(\mathcal{P},\mathcal{Q}\big),$ we have two graded $\mathcal{A}$-module structures (left and right):
$$
a^<\varphi:=a_{\mathcal{Q}}\circ\varphi,\hspace{5mm}
a^>\varphi:=(-1)^{a\cdot \varphi}\varphi\circ a_{\mathcal{P}}.$$
For homogeneous $a\in\mathcal{A}$ and  $\varphi\in\mathrm{Hom}_{\mathcal{R}}(\mathcal{P},\mathcal{Q}),$ 
\begin{equation}
\delta_a\varphi:=\big[a,\varphi]=a^<\varphi-a^>\varphi,
\end{equation}
and for $a_0,a_1,\ldots,a_k\in \mathcal{A}$, we put $\delta_{a_0,\ldots,a_k}:=\delta_{a_0}\circ\ldots\circ \delta_{a_k}.$
Notice that $\delta_{a,b}=(-1)^{ab}\delta_{b,a}$ and $\delta_{ab,c}=a\delta_{b,c}+(-1)^{ab}b\delta_{a,c}$,

\begin{definition}
\label{Def: Graded DO}
A map
$\Delta\in\mathrm{Hom}_{\mathcal{R}}(\mathcal{P},\mathcal{Q})$ is a \emph{graded differential operator} of order $\leq k$ if $\delta_{a_0,\ldots,a_k}(\Delta)=0,$ for all $a_0,\ldots,a_k\in \mathcal{A}$. It is homogeneous of \emph{degree} $g\in \mathcal{G}$ if $\Delta\in \mathrm{Hom}^g(\mathcal{P},\mathcal{Q}).$
\end{definition}
The set of order $\leq k$ graded differential operators is denoted by 
$\mathrm{Diff}_k(\mathcal{P},\mathcal{Q})$ and carries the obvious natural left (resp. right) $\mathcal{A}$-module structures denoted by $\mathrm{Diff}^<$ (resp. $\mathrm{Diff}^>$). 
Just as in the ungraded situation, we have $\mathrm{Diff}_0(\mathcal{P},\mathcal{Q}):=\mathrm{Hom}_{\mathcal{A}}(\mathcal{P},\mathcal{Q})$ and the order filtration of graded $\mathcal{A}$-bimodules
$$\mathrm{Diff}_0(\mathcal{P},\mathcal{Q})\subset \mathrm{Diff}_1(\mathcal{P},\mathcal{Q})\subset \ldots \subset \mathrm{Diff}_{k}(\mathcal{P},\mathcal{Q})\subset \mathrm{Diff}_{k+1}(\mathcal{P},\mathcal{Q})\subset\ldots$$
with $\mathrm{Diff}(\mathcal{P},\mathcal{Q}):=\bigcup_{k\geq 0}\mathrm{Diff}_k(\mathcal{P},\mathcal{Q}).$
Moreover, $\mathrm{Diff}(\mathcal{P},\mathcal{P})$ is a $\mathcal{G}$-graded left (noncommutative) $\mathcal{A}$-algebra with respect to the operation $a\cdot (\Delta_g\circ\nabla_h):=\big(a\cdot \Delta_g)\circ \nabla_h,$ and $\mathrm{Diff}(\mathcal{P},\mathcal{Q})$ is both a $\mathcal{G}$-graded right $\mathrm{Diff}(\mathcal{P},\mathcal{P})$ and left $\mathrm{Diff}(\mathcal{Q},\mathcal{Q})$-modules.
The interested reader is referred to \cite{Nestruev2} §9.66, §9.67] for details in the ungraded situation.

\begin{remark}[Notation]
If $\Delta:\mathcal{P}\rightarrow \mathcal{Q}$ is a differential operator of order $\leq k$ and grading $g$, denote the restriction to the $h$-th homogeneous component by $\Delta^{\mathcal{P}_h}:=\Delta|_{\mathcal{P}_h}:\mathcal{P}_h\rightarrow \mathcal{Q}_h[g]=\mathcal{Q}_{h+g}.$ Obviously, $\Delta^{\mathcal{P}_h}$ is a differential operator of order $\leq k$ in the category of $\mathcal{A}_0$-modules.
\end{remark}

The importance of Definition \ref{Def: Graded DO} is exemplified in that it may be used to generate other functors of calculus. For instance, \emph{graded derivations} are obtained by putting
$D_1(-)_{\mathcal{G}}:\mathbf{Mod}^{\mathcal{G}}(\mathcal{A})\rightarrow \mathbf{Mod}^{\mathcal{G}}(\mathcal{A}), \hspace{1mm} \mathcal{P}\mapsto D(\mathcal{P}):=\{\Delta\in\mathrm{Diff}_1^<(\mathcal{P})|\Delta(1_{\mathcal{A}})=0\}.$ 
Standard functors of differential calculus can be obtained in a similar fashion, but we do not discuss their definitions here.

The expected representability results for such functors hold in the graded situation.
For example, 
the functor $D_1^{\mathcal{G}}$ is represented by a $\mathcal{G}$-graded module of differential $1$-forms $\Omega^1(\mathcal{A})$, equipped with a unique degree zero graded derivation $\delta:\mathcal{A}\rightarrow \Omega^1(\mathcal{A}),$ satisfying its usual universal property \cite{Nestruev2}. 
An analogous universal property holds for functors $\mathrm{Diff}_k$ and its representing module of $k$-jets $\mathcal{J}^k.$

\subsection{Derivations in Vector Bundles}
\label{sssec: Der-operator section}
Given an $A$-module $P$, there is an $A$-submodule $\mathrm{Der}(P)\subset\mathrm{Diff}_1(P,P),$ whose elements are first order differential operators with scalar symbol. 

It is known that they provide the appropriate algebraic analog of derivations in vector bundles i.e. those $\mathbb{R}$-linear maps
$\nabla:\Gamma(E)\rightarrow \Gamma(E)$ satisfying $\Delta(fe)=\sigma_{\Delta}(f)e+f\Delta(e)$ for a unique vector field $\sigma_{\Delta}\in D(M).$
Such operators with zero symbol are simply  endomorphisms and so there is an exact sequence of Lie algebras,
\begin{equation}
    \label{eqn: Atiyah sequence}
   0\rightarrow \mathrm{End}(E)\rightarrow \mathrm{Der}(E)\rightarrow TM\rightarrow 0.
\end{equation}
Sequences of the above form are called \emph{Atiyah sequence} of the bundle $E$, originally introduced to study the existence of flat connections in the bundle \cite{Ati}.
\begin{remark}
It is worth mentioning that recently, a class of Atiyah-like sequences  have appeared in another context, via the description of functors of graded differential calculus in diole algebras \cite{Diole1}.
\end{remark}

Der-operators are also known to describe covariant derivatives associated to linear connections. This is clear if we understand a linear connection in $P$ as an $A$-module homomorphism $\nabla:D(A)\rightarrow \mathrm{Der}(P)$ satisfying $\nabla(fX+gY)=f\nabla(X)+g\nabla(Y)$ for all $f,g,\in A$ and $X,Y\in D(A)$ such that the following so-called \emph{Der-Leibniz rule} 
\begin{equation}
    \label{eqn: Der-Leibniz Rule}
    \nabla_X(f\cdot p)=X(f)\cdot p+f\cdot \nabla_X(p),
\end{equation}
holds for all $f\in A,p\in P.$ 
 Obviously, the connection $\nabla$ is flat if $\nabla$ is a Lie-algebra homomorphism.

\section{Inner Structures in Projective Modules}
The language for inner structures in a vector bundle that we utilize is formulated within the framework of multi-linear algebra related to our projective $A$-module $P.$

Specifically, consider the following tensor algebra of $P$,
$$\mathcal{T}(P):=\bigoplus_{p,q\geq 0}P_q^p,\hspace{4mm}\text{with } \hspace{1mm} P_q^p:=P^{\otimes_A p}\otimes_A (P^*)^{\otimes_A q},$$
with $P^*$ the $A$-linear dual.
It is clear that $\mathcal{T}(P)$ is a graded commutative algebra with multiplication defined in the obvious way, and is equipped with its usual $\mathrm{Aut}(P)$-action. We will also write $\mathrm{GL}(m;\mathbf{k})$ for this group.

Namely, if $\varphi\in \mathrm{Aut}(P)$ one extends it to an action on $\mathcal{T}(P)$ by setting $\varphi(\mathcal{P}_1\otimes\mathcal{P}_2):=\varphi(\mathcal{P}_1)\otimes\varphi(\mathcal{P}_2)$ for decomposable tensors $\mathcal{P}_i\in \mathcal{T}(P).$
More generally, given another projective $A$-module $Q$ we consider
$\mathcal{T}(P)\otimes\mathcal{T}(Q):=\bigoplus_{(p,q)}\bigoplus_{(r,s)}P_q^p\otimes_A Q_s^r$, where we have put
$$P_q^p\otimes_A Q_s^r=P^{\otimes_A p}\otimes_A (P^*)^{\otimes_A q}\otimes_A Q^{\otimes_A r}\otimes_A(Q^*)^{\otimes_A s}.$$

If $\mathrm{dim}(P)=m_P$ and $\mathrm{dim}(Q)=m_Q,$ there is a natural action by $\mathrm{GL}(m_P;\mathbf{k})\times\mathrm{GL}(m_Q;\mathbf{k}).$

\begin{example}
Consider a vector-valued bilinear form i.e.  an $A$-linear map $g:P\otimes P\rightarrow Q.$ 
If $P,Q$ are projective and finitely generated, the $A$-module of such forms $\mathrm{Bil}(P,Q)$ is identified with $P^*\otimes P^*\otimes Q.$ 
Viewed as an $A$-bilinear map $g:P\times P\rightarrow Q,$ the $\mathrm{Aut}(P)\times\mathrm{Aut}(Q)$-action is natural in the sense that the diagram
\begin{equation*}
\begin{tikzcd}
P\times P\arrow[d,"\varphi\times\varphi"] \arrow[r,"G"]&  Q\arrow[d,"\psi"]
\\
P\times P\arrow[r," \widetilde{G}"] &Q
\end{tikzcd}
\end{equation*}

is commutative.
\end{example}

\begin{definition}
\label{Definition: Inner Structure}
A element $\Xi\in \mathcal{T}(P)\otimes\mathcal{T}(Q)$ of homogeneous degree $(p,q)\times (r,s)$ which is invariant under the natural $\mathrm{Aut}(P)\times\mathrm{Aut}(Q)$-action is said to be an \emph{inner structure of type} $(p,q)\times (r,s).$
\end{definition}
More exactly, $\Xi$ represents the equivalence class of elements of $P_q^p\otimes_A Q_s^r$ under the above action i.e. lies in the orbit.

\begin{example}
\label{Example: Inner Vector-Valued Form}
To say that a vector valued bilinear form
$g\in P_2^0\otimes Q_0^1$ is an inner structure means we are considering all $g$ and $\widetilde{g}$ to be equivalent if $\widetilde{g}$ is of the form $\widetilde{g}=\psi\circ g\big(\varphi^{-1}\otimes\varphi^{-1}\big)$ for $\Upsilon=(\varphi,\psi)\in \mathrm{Aut}(P)\times\mathrm{Aut}(Q).$
\end{example}

\subsection{Symmetry Algebras and Gauge Structures}
To each inner structure one may attribute its symmetry algebra. Specifically, given a type $(p,q)$ inner structure $\Xi\in P_q^p,$ we may consider its \emph{Lie algebra of infinitesimal symmetries}
\begin{equation}
    \label{eqn: Inner Structure Symmetry Lie Algebra}
    \mathfrak{o}(P;\Xi):=\big\{\varphi\in \mathrm{End}_A(P)\mid \sum_{i=1}^q \Xi\big(p_1,\ldots,\varphi(p_i),\ldots,p_q)=0\big\}.
\end{equation}

This $A$-module is the infinitesimal counterpart of the space of \emph{symmetries} of $\Xi$, denoted by $\mathrm{O}(P;\Xi):=\{\varphi\in \mathrm{Aut}(P)\mid \Xi\big(\varphi(p_1),\varphi(p_2),\ldots,\varphi(p_q)\big)=\Xi(p_1,p_2,\ldots,p_q)\}.$ 
\vspace{2mm}

\subsubsection{Geometric Picture}
Consider a vector bundle $E\rightarrow M$. Then a symmetry is a regular vector bundle morphism $\overline{f}$ covering the identity, such that 
$\Xi\big(\overline{f}^*(p_1),\overline{f}^*(p_2),\ldots,\overline{f}^*(p_q)\big)=\Xi(p_1,\ldots,p_q),$ where $\overline{f}^*$ is the universal homomorphism associated with a regular vector bundle morphism (see for instance, \cite[{\normalfont\S 0.3.13}]{VinFat}). 
It is easy to see then that $\mathfrak{o}(P;\Xi)$ is indeed the appropriate infinitesimal counterpart of $\mathrm{O}(P;\Xi).$ Indeed, take a one-parameter family of such vector bundle morphisms $\{\overline{f}_t\}_{t\in \mathbb{R}}$ with $\overline{f}_0=\mathrm{id}$ and suppose that each $\overline{f}_t$ is a symmetry of $\Xi$ so that
\begin{equation}
    \label{eqn: Symmetry to Inf Symmetry Relation}
\Xi\big(\overline{f}_t^*(p_1),\ldots,\overline{f}_t^*(p_q)\big)=\Xi(p_1,\ldots,p_q),\hspace{3mm} p_1,\ldots, p_q\in \Gamma(E).
\end{equation}
By formally taking the derivative $\frac{d}{dt}$ of relation (\ref{eqn: Symmetry to Inf Symmetry Relation}), we obtain that
$$\Xi\big(\frac{d}{dt}\mid_{t=0}\overline{f}_t^*(p_1),p_2,\ldots,p_q\big)+\ldots+\Xi\big(p_1,p_2,\ldots,\frac{d}{dt}\mid_{t=0}\overline{f}_t^*(p_q)\big)=0.$$
By setting $\varphi:=\frac{d}{dt}\mid_{t=0}\overline{f}_t^*,$ we recover the defining relation for the infinitesimal symmetry algebra (\ref{eqn: Inner Structure Symmetry Lie Algebra}).
\vspace{1mm}

\subsubsection{Examples.}
Consider the situation when $P$ is an $A$-module supplied with a bilinear form $b:P\times P\rightarrow A.$
The $b$-\emph{orthogonal group} of $P$ is 
\begin{equation}
\label{orthogonalgroup}
\mathrm{O}(P,b):=\{\varphi\in\mathrm{Aut}(P)|b\big(\varphi(p_1),\varphi(p_2)\big)=b(p_1,p_2), p_1,p_2\in P\}.
\end{equation}
Moreover, the sub-module of infinitesimal symmetries is
$$\mathrm{o}(P,b):=\{\varphi\in\mathrm{End}(P)|b\big(\varphi(p_1),p_2\big)+b\big(p_1,\varphi(p_2)\big)=0\}\subset \mathrm{End}(P).$$
This is not only an $A$-sub-module, but a Lie sub-algebra as well.

By varying the type of inner structure, many important symmetry algebras can be found. For instance given $\psi\in\mathrm{End}(P)$, the corresponding module of \emph{symmetries of} $\psi$ is
\begin{equation*}
    \mathrm{GL}(P,\psi):=\{\Phi\in\mathrm{Aut}(P)|\big[\Phi,\psi\big]=0\}.
\end{equation*}
All endomorphisms of $P$ commuting with $\psi$ constitute a Lie sub-algebra $\mathrm{gl}(P,\psi)$ of $\mathrm{End}(P).$

Another important example arises by considering an (inner) complex structure $J$ i.e. an endomorphism $J:P\rightarrow P$ such that $J^2=-\mathrm{id}_P$. 

\subsection{Gauge Structures}
A linear connection $\nabla$ in $P$ is said to \emph{preserve} an inner structure $\Xi\in \mathcal{T}(P)$ if $d_{\nabla^{\mathcal{T}(P)}}(\Xi)=0,$ where $\nabla^{\mathcal{T}}$ is the induced connection in $\mathcal{T}(P)$ and $d_{\nabla}$ is the usual differential associated to a flat connection. An inner structure in an $A$-module $P$ that admits a preserving-it
linear connection is called a \emph{gauge structure}. We will require a slightly more general definition.

\begin{definition}
\label{General Gauge Structure Definition}
Suppose that $P,Q$ are endowed with flat linear connections $\nabla,\Box$, respectively with shared scalar symbol.
Then $\Xi$ is said to be a \emph{gauge structure of type $(p,q)\times (r,s)$} if $d_{\nabla^{\mathcal{T}}}(\Xi)=0$.
\end{definition}
That is, $\Xi$ is a gauge structure if
\begin{equation}
    \label{eqn: Gauge Structure Condition}
    \nabla_X^{\mathcal{T}}(\Xi)=0,\hspace{3mm} \text{ for each }\hspace{1.5mm} X\in D(A).
\end{equation}

Let us describe gauge bilinear forms i.e. gauge structures of type $(0,2).$
\begin{example}
Let $P,\nabla$ be as above and consider a bilinear form $b:P\times P\rightarrow A.$ 

The Der-Leibniz rule for the induced connection $\big(\nabla\otimes \nabla\big)^{*},$ gives that
\begin{equation}
    \label{bilinearformconnection}
\nabla_X^{\text{Bil}}(b)(p_1,p_2)=X\big(b(p_1,p_2)\big)-b\big(\nabla_X(p_1),p_2\big)-b\big(p_1,\nabla_X(p_2)\big).
\end{equation}
That $\nabla$ preserves $b$ means
$\nabla_X^{\mathrm{Bil}}(b)=0,$ for each $X\in D(A)$ which reads as 
$$X\big(b(p_1,p_2)\big)=b\big(\nabla_X(p_1),p_2\big)+b\big(p_1,\nabla_X(p_2)\big).$$
\end{example}
\vspace{.5mm}

\subsubsection{Vector-Valued Forms}
\label{sssec: Vector-Valued Forms}
Consider two projective $A$-modules equipped with linear connections $(P,\nabla)$ and $(Q,\Box)$ with shared scalar symbol $\sigma(\nabla)=\sigma(\Box)=X\in D(A)$ and fix a vector-valued bilinear form $g:P\otimes P\rightarrow Q$ (an inner structure of type $(0,2)\times (1,0)$).

There is an induced linear connection in the $A$-module of such forms $\mathrm{Bil}(P;Q)$ defined by:
$$\nabla^{\mathrm{Bil}(P;Q)}:D(A)\rightarrow \mathrm{Der}\big(\mathrm{Bil}(P;Q)\big),\hspace{2mm} X\mapsto \nabla_X^{\mathrm{Bil}(P;Q)},$$
which upon evaluation on $g\in \mathrm{Bil}(P;Q)$ is given by
$$\nabla_X^{\mathrm{Bil}(P;Q)}(g)(p_1\otimes p_2)=\Box_X\big(g(p_1\otimes p_2)\big)-g\big(\nabla_X(p_1)\otimes p_2\big)-g\big(p_1\otimes \nabla_X(p_2)\big),$$
for all $p_1,p_2\in P,X\in D(A).$

The following result then clarifies when $g$ is a gauge structure.
\begin{lemma}
A vector-valued bilinear form $g\in \mathrm{Bil}(P;Q)$ is a gauge structure if and only if
$\Box_X\big(g(p_1\otimes p_2)\big)=g\big(\nabla_X(p_1)\otimes p_2\big)+g\big(p_1\otimes \nabla_X(p_2)\big),$
for all $p_1,p_2\in P,X\in D(A).$
\end{lemma}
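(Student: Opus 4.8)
The plan is to unwind the definitions and observe that the asserted equivalence is essentially tautological once the formula for $\nabla^{\mathrm{Bil}(P;Q)}_X(g)$ is in hand. By Definition \ref{General Gauge Structure Definition} together with equation (\ref{eqn: Gauge Structure Condition}), saying that $g$ is a gauge structure of type $(0,2)\times(1,0)$ means precisely that $\nabla^{\mathrm{Bil}(P;Q)}_X(g)=0$ for every $X\in D(A)$, where $\nabla^{\mathrm{Bil}(P;Q)}$ is the induced connection in $\mathrm{Bil}(P;Q)=P^*\otimes P^*\otimes Q$ built out of $\nabla$ on $P$ and $\Box$ on $Q$ (these having shared scalar symbol, so that the tensor-product connection is well defined with the same symbol $X$).

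First I would recall, or rederive, the explicit evaluation formula
$$\nabla_X^{\mathrm{Bil}(P;Q)}(g)(p_1\otimes p_2)=\Box_X\big(g(p_1\otimes p_2)\big)-g\big(\nabla_X(p_1)\otimes p_2\big)-g\big(p_1\otimes \nabla_X(p_2)\big),$$
which is stated just above the Lemma; it follows from the Der-Leibniz rule (\ref{eqn: Der-Leibniz Rule}) for the induced connection on $(P^*\otimes P^*\otimes Q)$ applied to the element $g$ paired against $p_1\otimes p_2$, using that the dual connection on $P^*$ is characterized by $X(\xi(p))=(\nabla^*_X\xi)(p)+\xi(\nabla_X p)$. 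Then I would simply note that the element $\nabla_X^{\mathrm{Bil}(P;Q)}(g)\in\mathrm{Bil}(P;Q)$ vanishes if and only if it vanishes on all decomposable $p_1\otimes p_2$ (since such tensors generate $P\otimes P$ over $A$ and the expression is $A$-bilinear in its arguments), i.e. if and only if
$$\Box_X\big(g(p_1\otimes p_2)\big)-g\big(\nabla_X(p_1)\otimes p_2\big)-g\big(p_1\otimes \nabla_X(p_2)\big)=0$$
for all $p_1,p_2\in P$. Rearranging gives exactly the claimed identity, and running the implications in both directions (for each fixed $X$, and then quantifying over $X\in D(A)$) closes the argument.

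The only point requiring a word of care — and the closest thing to an obstacle — is the reduction from ``vanishing as an element of $\mathrm{Bil}(P;Q)$'' to ``vanishing on decomposable tensors'': this uses that $P$ is projective and finitely generated so that $\mathrm{Bil}(P;Q)\cong P^*\otimes_A P^*\otimes_A Q$ and evaluation $\mathrm{Bil}(P;Q)\times(P\otimes_A P)\to Q$ is a nondegenerate pairing, exactly as invoked in the Example identifying $\mathrm{Bil}(P,Q)$ with $P^*\otimes P^*\otimes Q$. Granting that, no further computation is needed; the statement is a direct transcription of equation (\ref{eqn: Gauge Structure Condition}) through the evaluation formula for $\nabla^{\mathrm{Bil}(P;Q)}$, in complete parallel with the scalar-valued case treated in equation (\ref{bilinearformconnection}).
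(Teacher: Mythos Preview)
Your proposal is correct and matches the paper's approach: the paper does not give an explicit proof, treating the lemma as an immediate consequence of the displayed formula for $\nabla_X^{\mathrm{Bil}(P;Q)}(g)(p_1\otimes p_2)$ appearing just before it, which is exactly what you do. Your extra remark on reducing to decomposable tensors is a reasonable point of care but not something the paper pauses to justify.
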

In this case, one says that the pair $(\nabla,\Box)$ of connections is $g$-preserving if
$\Box_X\big(g(p_1,p_2)\big)-g\big(\nabla_X(p_1),p_2\big)-g\big(p_1,\nabla_X(p_2)\big)=0,$
for every $X\in D(A).$

\begin{lemma}
Any connection in $P$ which preserves a bilinear form $b:P\times P\rightarrow A$ is equivalently given by a $\tilde{g}:P\times P\rightarrow Q$-preserving pair $(\nabla,D_X)$ in the trivial bundle corresponding to the $A$-module $Q,$ where $D_X:=X$ is the standard trivial connection for each $X\in D(A).$
\end{lemma}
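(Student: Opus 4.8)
The plan is to unwind both sides of the claimed equivalence through the definitions already in place and observe that they are literally the same condition once one identifies the trivial connection $D_X = X$ on $Q$. First I would fix the data: a projective $A$-module $P$ with a connection $\nabla$ (with scalar symbol), a bilinear form $b : P \times P \to A$, and the trivial $A$-module $Q$ together with $D : D(A) \to \mathrm{Der}(Q)$, $X \mapsto D_X := X$, which is manifestly a flat connection with scalar symbol $X$ (here I am tacitly using $Q = A$, or more generally that the trivial connection on a free/trivial module is the componentwise action of vector fields; I would state which convention is meant).

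Next I would recall that, by the example preceding the statement, $\nabla$ preserves $b$ exactly when $\nabla_X^{\mathrm{Bil}}(b) = 0$ for all $X \in D(A)$, i.e. $X\bigl(b(p_1,p_2)\bigr) = b\bigl(\nabla_X(p_1),p_2\bigr) + b\bigl(p_1,\nabla_X(p_2)\bigr)$. On the other side, by the previous Lemma a vector-valued form $\tilde{g} : P \times P \to Q$ is a gauge structure for the pair $(\nabla,\Box)$ iff $\Box_X\bigl(\tilde{g}(p_1\otimes p_2)\bigr) = \tilde{g}\bigl(\nabla_X(p_1)\otimes p_2\bigr) + \tilde{g}\bigl(p_1\otimes\nabla_X(p_2)\bigr)$. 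Taking $\Box = D$, so $\Box_X = X$ acting on $Q$, and taking $\tilde{g} = b$ viewed as a $Q$-valued form via the identification of $Q$ with $A$, the gauge-structure condition becomes $X\bigl(b(p_1,p_2)\bigr) = b\bigl(\nabla_X(p_1),p_2\bigr) + b\bigl(p_1,\nabla_X(p_2)\bigr)$, which is exactly the $b$-preservation condition. This gives the forward direction; for the converse one runs the same chain of identifications backwards, noting that any $Q$-valued form whose target is the trivial module is the same datum as an $A$-valued bilinear form.

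I would then package this as: the assignment $b \mapsto \tilde{g} := b$ (under $Q \cong A$) is a bijection between $A$-valued bilinear forms on $P$ and $Q$-valued ones, and under this bijection $\nabla$ preserves $b$ if and only if $(\nabla, D)$ is $\tilde{g}$-preserving, because $\nabla_X^{\mathrm{Bil}(P;A)}(b) = \nabla_X^{\mathrm{Bil}(P;Q)}(\tilde{g})$ term by term once $\Box_X = D_X = X$. Finally I would remark that the compatibility of scalar symbols is automatic here ($\sigma(\nabla) = X = \sigma(D_X)$), so the hypotheses of the preceding definitions are met, and that flatness of $D$ is trivial, so no extra condition on $\nabla$ (such as flatness) is needed beyond what was assumed of $b$'s preserving connection.

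The only genuine subtlety — and the step I expect to require the most care in the write-up rather than real difficulty — is pinning down the identification "$Q$ is the trivial bundle" precisely: one must say whether $Q = A$ as an $A$-module (so $\mathrm{Bil}(P;Q) = \mathrm{Bil}(P;A)$ canonically) or $Q$ is an arbitrary trivial module $A^{\oplus n}$ with $D_X$ acting diagonally, and check that the displayed formula for $\nabla_X^{\mathrm{Bil}(P;Q)}$ reduces, upon substituting $\Box_X = X$, to formula (\ref{bilinearformconnection}). Once that bookkeeping is fixed, the proof is a direct comparison of the two defining identities.
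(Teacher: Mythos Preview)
Your proposal is correct and is exactly the definitional unwinding one would expect: the paper states this lemma without proof, treating it as an immediate consequence of the preceding definitions (the formula for $\nabla_X^{\mathrm{Bil}(P;Q)}$ and the $b$-preservation condition in the example just before), and your argument makes that implicit reasoning explicit. The only remark is that the paper evidently intends $Q=A$ with the trivial connection $D_X=X$ acting by the derivation itself, so your caveat about $Q=A^{\oplus n}$ is unnecessary here; once that identification is fixed, the two displayed conditions coincide verbatim, as you say.
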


A systematic study of gauge structures in arbitrary modules, including their classification up to a suitable notion of equivalence is possible. For instance, given a vector bundle $E$, a linear connection $\nabla$, and an inner structure $\Xi\in \Gamma(E^{\otimes p}\otimes (E^*)^{\otimes q}),$ subject to natural compatibilities between $\nabla$ and $\Xi$, it is possible to make precise when two such pairs are \emph{equivalent} and by attributing to such objects certain cohomological invariants, one may classify these structures (see §§\ref{ssec: Future Directions} below for future perspectives).
Treating these topics lies outside the scope of this paper. 
\vspace{1mm}

The remainder of this paper is devoted to showing how the information concerning simple classes of gauge structures and their symmetry algebras is contained in the description of functors of differential calculus over special graded commutative algebras.

\section{An `Ecosystem' of Graded Algebras and their Differential Calculus}
We begin by introducing a general species of graded commutative algebras and specializing our discussion to the simple case of what we call \emph{triole algebras}. Roughly speaking this formalism corresponds to calculus in the presence of a gauge structure of the type $(0,2)\times (1,0)$.
This definition also includes the diole algebras introduced in \cite{Diole1}.

\subsection{$n$-olic Algebras}
Fix an integer $n\in\mathbb{Z}_{>0}.$
\begin{definition}
\label{n-ole algebra definition}
An \emph{n-ole algebra} is a graded commutative algebra $\EuScript{N}=\bigoplus_{i=0}^n N_i$ with the following properties:
\begin{itemize}
    \item $A:=N_0$ is a unital commutative $\mathbf{k}$-algebra;
    \item $N_i\in \mathrm{Mod}(A)$ for $i=1,\ldots,n$ (i.e. there are multiplications $A\cdot N_i\subseteq N_i$),
    \item There are a family of $A$-bilinear maps $\beta_j^k:N_k\otimes_A N_j\rightarrow N_{k+j}$ for $k=1,\ldots,N$ and $j=k,\ldots,(n-k),$
\end{itemize}
subject to the further conditions that $N_i\cdot N_j:=\emptyset$ for all $i+j>n$ and with the property that $N_n\cdot N_n:=\emptyset.$
\end{definition}
The algebra $\EuScript{N}$ is associative, so we have that the standard relation
$\mu\circ \big(\mathbf{1}\otimes \mu\big)=\mu\circ \big(\mu\otimes \mathbf{1}\big)$
holds, where $\mu$ is the algebra multiplication. 
On homogeneous components this is an associativity relation between the family of $A$-bilinear maps $\beta_j^k$ given by
\begin{equation}
    \label{eqn: Associativitiy for N-ole algebras}
    \beta_{j+k}^i\circ \big(\mathbf{1}_{N_i}\otimes \beta_k^j\big)=\beta_k^{i+j}\circ \big(\beta_j^i\otimes \mathbf{1}_{N_k}\big),
    \end{equation}
where $\mathbf{1}:\EuScript{N}\rightarrow \EuScript{N}$ is the identity map and $\mathbf{1}_{N_k}:=\mathbf{1}|_{N_k},$ with $i+j+k\leq n.$
\\

\subsubsection{Modules}
Two interesting notions of a module over an $n$-ole algebra can be found. 

The first describes a (left) $\EuScript{N}$-module as a $\mathbb{Z}$-graded module $\EuScript{W}=\bigoplus_{i\geq 0}\EuScript{W}_i$ equipped with an action $\nu\colon \EuScript{N}\otimes\EuScript{W}\rightarrow \EuScript{W}$ such that the diagrams
\[
\begin{tikzcd}
\EuScript{N}\otimes\EuScript{N}\otimes\EuScript{W}\arrow[d,"\mu\otimes \EuScript{W}"] \arrow[r, "\EuScript{N}\otimes \nu"] & \EuScript{N}\otimes\EuScript{W}\arrow[d,"\nu"]
\\
\EuScript{N}\otimes\EuScript{W}\arrow[r,"\nu"]&\EuScript{W}
\end{tikzcd}\hspace{50pt}
\begin{tikzcd}
1\otimes\EuScript{W}\arrow[d,"i\otimes\EuScript{W}"] \arrow[r,"\cong"] & \EuScript{W}
\\
\EuScript{N}\otimes\EuScript{W}\arrow[ur,"\nu"]
\end{tikzcd}
\]
commute, where $i$ is the unit map for the algebra $\EuScript{N}$ and $\mu$ is the algebra multiplication.
Morphisms are defined simply as morphisms of graded modules over a graded algebra.

We also have the notion of a \emph{strict} morphism of left $\EuScript{N}$-modules as those $\mathbf{k}$-linear maps $\phi:\EuScript{W}\rightarrow\EuScript{V}$ such that $\phi(W_g)\subset V_g$ for each $g\in\mathcal{G}.$ This class of morphisms gives rise to a category $\mathrm{Mod}_{\mathrm{str}}^{\mathcal{G}}(\EuScript{N})$. More generally, we have a category $\mathrm{Mod}^{\mathcal{G}}(\EuScript{N})$ whose space of morphisms is given by $\mathrm{Hom}_{\EuScript{N}}^{\mathcal{G}}(\EuScript{W},\EuScript{V})=\bigoplus_{g\in G}\mathrm{Hom}_{\EuScript{N}}^g\big(\EuScript{W},\EuScript{V}\big)$, 
with those in the summand of degree $g$ given by those $\EuScript{N}$-linear maps $\varphi:\EuScript{W}\rightarrow \EuScript{V}$ such that $\varphi(W_j)\subseteq V_{j+g}.$ Note that $\EuScript{N}$-linearity here refers to the obvious graded linearity, $\varphi(a\cdot w)=(-1)^{a\cdot \varphi}a\cdot \varphi(w)$ for each $a\in \EuScript{N}$ and $w\in \EuScript{W}.$

The second interesting class of modules form a sub category in $\mathrm{Mod}^{\mathcal{G}}(\EuScript{N}).$
\begin{definition}
\label{TruncatedTriolicModuleDefinition}
A \emph{truncated $\EuScript{N}$-module} is a $\mathbb{Z}$-graded $\EuScript{N}$-module $\EuScript{W}:=\bigoplus_{i=1}^nW_i$ with $W_i:=\emptyset$ for all $i>n,$ together with the datum:
$$(\EuScript{W},\{\lambda_k^j\}_{k=1\ldots,n,j=1,\ldots,n-k})=(\EuScript{W},\lambda_A^{i},\lambda_1^{j},\ldots,\lambda_{k}^{j},\ldots),$$
for each $k$ we have that $\lambda_k^j:N_k\otimes_A W_j\rightarrow W_{k+j}$ for $j=1,\ldots,n-k$ and where $\lambda_A^i$ are the $A$-module structures on $W_i$ for all $i=1,\ldots,n$.
These are
subject to the compatibility condition
\begin{equation}
\label{eqn:truntrimod}
\nu_{\EuScript{N}_i}^{(j+k)}\circ\big(\mathbf{1}_{N_i}\otimes \nu_{\EuScript{N}_j}^{(k)}\big)=\nu_{\EuScript{N}_{i+j}}^{(k)}\circ \mu_{i,j}\circ \alpha_{ijk},
\end{equation}
where 
$\alpha_{ijk}:\EuScript{N}_i\otimes (\EuScript{N}_j\otimes \EuScript{W}_k)\rightarrow (\EuScript{N}_i\otimes\EuScript{N}_j)\otimes\EuScript{W}_k,$ is the associator isomorphism, $\mu_{i,j}:\EuScript{N}_i\otimes\EuScript{N}_j\rightarrow \EuScript{N}_{i+j}$ is the multiplication.
\end{definition}

Truncated modules over $n$-olic algebras for $n>1$ will generally carry a family of compatibilities, but note that in the simplest situation of a diole algebra $\EuScript{A}=A\oplus P$ there are no compatibility relations and to define a truncated diole module $\EuScript{P}=P_0\oplus P_1$ it suffices to specify an $A$-bilinear map  $\beta:P\otimes_A P_0\rightarrow P_1.$
\\

\subsubsection{Example: Diole Algebras}
\label{sssec: Diole Algebras}
Recall the definition of a diole algebra and of a truncated diole module $(\EuScript{P},\beta)$ from \cite{Diole1}.
We call $\EuScript{A}=(A,P)$ geometric if  $A$ is smooth and $P$ is a geometric $A$-module. Of course, the prototype diole algebra is that associated to a vector bundle $\pi:E\rightarrow M$ i.e. $\EuScript{A}_{\pi}:=C^{\infty}(M)\oplus \Gamma(\pi).$

\begin{example}
For a diole algebra $\EuScript{A}=A\oplus P$ 
the trivial pair $(A\oplus P,\iota)$ where $\iota:P\otimes_A A\cong P$ is a truncated diolic module.
\end{example}

\begin{example}
\label{Truncated diole module example}
The pair $\big(\Omega^1(A),\mathcal{J}^1(A)\big)$ is a truncated diole module. with the map $\beta:P\otimes \Omega^1(A)\rightarrow \mathcal{J}^1(P)$ defined by $\beta(p\otimes da):=j_1(ap)-aj_1(p),$ with $j_1$ the universal $1$-jet operator.
Note further that this map is $A$-linear $\beta(bp\otimes da)=b\beta(p\otimes da)$ and satisfies 
$\beta\big(p\otimes d(ab)\big)=\beta(p\otimes adb)+\beta(p\otimes bda).$
\end{example}

\begin{example}
If $\EuScript{A}$ is a geometric diole alebra, then the pair $\Omega^1(A)\oplus \Omega^1(P)$ is a diolic module with structure map $\beta:P\otimes_A \Omega^1(A)\rightarrow \Omega^1(P)$ induced by the isomorphism of $A$-modules $\Omega^1(P)=\Omega^1(A)\otimes P.$
\end{example}

\begin{example}
The pair $\mathcal{J}^k(\EuScript{A}):=\mathcal{J}^k(A)\oplus \mathcal{J}^k(P)$ is a diolic module under the isomorphism $\mathcal{J}^k(P)\cong \mathcal{J}^k(A)\otimes_A P$ identifying
$j_k^P(p)$ with $j_k^A(1_A)\otimes_A p.$
\end{example}

\begin{example}
\label{Example: Diole Along Map}
Let $\varphi:A\rightarrow B$ be a morphism of commutative algebras and let $Q$ be a left $B$-module, viewed as an $A$-module via $\varphi$. When equipped with such a module structure, write $Q^{<\varphi}.$
To any $A$-module homomorphism $\overline{\varphi}:P\rightarrow Q$ there is an associated diole module
$\EuScript{P}_{\overline{\varphi}}:=B\oplus Q^{<\varphi}$ with $\beta_{\overline{\varphi}}:B\otimes_A P\rightarrow Q$ given by universal property of scalar extensions.
\end{example}

\subsection{Triole Algebras}
\label{sssec: Triole Algebras}
A triole algebra is a $\mathbb{Z}$-graded commutative algebra  $\EuScript{T}:=\mathcal{T}_0\oplus \mathcal{T}_1\oplus \mathcal{T}_2$ such that $\mathcal{T}_0=A$ is a commutative unital $\mathbf{k}$-algebra, with $\mathcal{T}_1,\mathcal{T}_2$ are $A$-modules, such that $\mathcal{T}_i:=0,\forall i\neq 0,1,2$ and $\mathcal{T}_2\cdot \mathcal{T}_2:=0.$
Algebra multiplication is generated by an $A$-bilinear form $g:\mathcal{T}_1\otimes \mathcal{T}_1\rightarrow \mathcal{T}_2.$ It is said to be \emph{regular} if $g$ is non-degenerate.
\begin{remark}
Given a bilinear form $g\in\text{Bil}(P,Q),$ we have the corresponding adjoint morphism
$
    \mathfrak{g}:=g^{\sharp}:P\rightarrow \mathrm{Hom}_A(P,Q),\hspace{2mm} \mathfrak{g}(p_1)p_2:=g(p_1,p_2).$
We say that  
$g$ is regular or non-degenerate if $\mathfrak{g}$ is an isomorphism of $A$-modules.
\end{remark}

\begin{example}
A triole algebra can be associated to: a pair of vector bundle $\pi:E\rightarrow M$ and $\eta:F\rightarrow M$ with $E$ endowed with a $\Gamma(\eta)$-valued fiber metric, $g:\Gamma(E)\otimes_{C^{\infty}(M)}\Gamma(E)\rightarrow \Gamma(F),$ denoted by $\EuScript{T}_{\pi,\eta}:=C^{\infty}(M)\oplus (\Gamma(\pi),g)\oplus \Gamma(\eta).$ 
\end{example}

Here are several interesting examples of triole algebras arising in geometry.
\begin{example}
Let $L$ be a line bundle over a smooth manifold or algebraic variety $X$ with module of sections $Q$  and let $a_1,\ldots,a_n$ be global sections of the group $\mathbb{G}_m.$ Consider
$\mathscr{T}_Q^{\mathbb{G}_m}:=A\oplus \big(Q^{\oplus n},\ell(a_,\ldots,a_n)^Q\big)\oplus Q^{\otimes 2}/T^{> 2}Q,$
with bilinear form
$\ell(a_1,\ldots,a_n)^Q:Q^{\oplus n}\otimes_A Q^{\oplus _n}\rightarrow Q^{\otimes 2}/T^{>2}Q,$
defined by 
$(q_1,\ldots,q_n)\otimes (q_1',\ldots,q_n')\longmapsto \sum_{i=1}^na_iq_i\otimes q_i'.$
This defines triolic algebra over $A,$ precisely since the natural multiplication in $Q^{\otimes 2}$ inherited from the entire tensor algebra $T^{\bullet}Q,$ squares to zero in the quotient $Q^{\otimes 2}/T^{>2}Q.$
\end{example}
\begin{example}
Let $\varphi:B\hookrightarrow A$ be an embedded sub-algebra and suppose that $I\subseteq B$ is a two-sided ideal of $B$. Let $\iota:B\rightarrow A$ be an arbitrary algebra morphism.

There is a triole algebra 
$$\EuScript{T}:=B\oplus (I,g)\oplus A^{<\varphi},\hspace{2mm} g:I\otimes_{\mathbf{k}}I\rightarrow A^{<\varphi},\hspace{2mm} g:=\big(\iota \otimes \iota\big)|_{I\times I}\rightarrow A.$$

If $\iota$ is surjective, $\mathrm{Im}(g)$ is an ideal in $A.$
\end{example}
The above example can be specified to the case of a sub-manifold and a distribution.
\begin{example}
Let $i:S\hookrightarrow M$ be a sub-manifold and $D_S\subseteq TS$ a distribution. Denote $C_D^{\infty}(S)$ the functions on $S$  constant along leaves of $D$, and write $C_D^{\infty}(M):=\{f\in C^{\infty}(M)|i^*(f)\in C_D^{\infty}(S)\}.$ Let $\mathcal{I}_S$ be the ideal of $S$ i.e. $\mathcal{I}_S:=\{f\in C^{\infty}(M)|i^*(f)=0\}.$
Then 
$\EuScript{T}=C_D^{\infty}(M)\oplus \mathcal{I}_S\oplus C^{\infty}(M),$
arising as in the above example is a triole algebra by declairing the product of two functions on $M$ to be zero.
\end{example}

\section{Basic Functors of Differential Calculus in $n$-olic Algebras}
Let us describe the functors of graded derivations in the setting of $n$-olic algebras. We will then interpret the resulting operators in the case of diolic and triole algebras. 
As an application, we will extract two generalized Atiyah-like sequences associated to these operators.

A more systematic study of the functors of differential calculus in triole algebras is given in \cite{Triole1}, while the general situation for $n$-ole algebras is a subject of our future work.

\begin{lemma}
Let $\EuScript{N}$ be an $n$-olic algebra.
The module of graded derivations admits a homogeneous decomposition as 
$$D(\EuScript{N})=D(\EuScript{N})_0\oplus D(\EuScript{N})_1\oplus D(\EuScript{N})_2\oplus D(\EuScript{N})_3\oplus\ldots\oplus  D(\EuScript{N})_n,$$
which admit the following descriptions:
\begin{itemize}
\item Degree $0$: are $n$-tuples of operators
$X=\big(X^A,X^1,\ldots,X^n\big),$
where $X^A\in D(A)$, $X^i\in \mathrm{Der}(N_i)$ with $\sigma(X^i)=X^A$ for $i=1,\ldots, n$ which satisfy the symmetry relations
\begin{equation}
    \label{eqn: Deg Zero Symmetry Relation}
X\big(\beta_j^i(p_i,p_j)\big)=\beta_j^i\big(X^i(p_i),p_j\big)+\beta_j^i\big(p_i,X^j(p_j)\big);
\end{equation}
    \item Degree $1$: are tuples $X_1=(X_1^A,X^1,\ldots,X^{n-1}),$ where $X_1^A\in D(A,N_1)$ and $X^k\in \mathrm{Der}^{\beta_k^1}(N_k,N_{k+1})$ i.e. $X_1(a\cdot p_k)=\beta_k^1\big(X_1^A(a)\otimes p_k\big)+aX^k(p_k)$ for all $k=1,\ldots,n-1,$ and moreover satisfy the symmetry relation $X_1\big(\beta_j^i(p_i,p_j)\big)=\beta_j^{i+1}\big(X^i(p_i),p_j\big)+(-1)^i\beta_{j+1}^i\big(p_i,X^j(p_j)\big),$ for all $i+j\leq n;$
    
   \item $\ldots$

    \item Degree $\ell$: are $X_{\ell}=(X_{\ell}^A,X^1,\ldots,X^{n-\ell})$ where $X_{\ell}^A\in D(A,N_{\ell})$ and $X^k\in \mathrm{Der}^{\beta_k^{\ell}}(N_k,N_{k+\ell})$ for all $k=1,\ldots,n-\ell$ which further satisfy the relations $X(p_i\cdot p_j)=\beta_j^{i+\ell}\big(X^i(p_i),p_j\big)+(-1)^{\ell i }\beta_{j+\ell}^i\big(p_i,X^{j}(p_j)\big)$ for all $i+j\leq n-\ell.$

    \item Degree $n-1$: are pairs $X_{n-1}=\big(X_{n-1}^A,X^1\big)$ with $X_{n-1}^A\in D(A,N_{n-1})$ and $X^1\in\mathrm{Der}^{\beta_{1}^{N-1}}(N_1,N_n).$

    \item Degree $n$: are simply $N_n$-valued derivations of $A.$
\end{itemize}
\end{lemma}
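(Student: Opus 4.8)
The plan is to analyze the conditions on a graded derivation $X \in D(\EuScript{N})$ component by component, using that $D(\EuScript{N}) \subseteq \mathrm{Diff}_1^<(\EuScript{N})$ together with the vanishing on the unit $1_A \in N_0 = A$. First I would fix a homogeneous degree $\ell \in \{0, 1, \ldots, n\}$ and take $X = X_\ell \in D(\EuScript{N})_\ell$, so that $X(N_i) \subseteq N_{i+\ell}$ for each $i$; since $N_j = 0$ for $j > n$, this automatically forces $X$ to annihilate $N_i$ for $i > n - \ell$, which is what produces the progressively shorter tuples as $\ell$ increases. The restriction $X^A := X|_{A}$ is then a first-order operator $A \to N_\ell$ with zero (i.e. scalar, in the appropriate sense) symbol — that is, an element of $D(A, N_\ell)$, using that $\mathrm{Der}$-operators are exactly first-order operators with scalar symbol as recalled in the Der-operator subsection. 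Likewise $X^k := X|_{N_k}$ is a first-order operator $N_k \to N_{k+\ell}$.

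The key computation is to extract the Der-Leibniz-type rule and the symmetry relation from the single condition that $X$ is a degree-$\ell$ derivation of the graded algebra $\EuScript{N}$. For the Leibniz rule: apply $X$ to a product $a \cdot p_k$ with $a \in A = N_0$ and $p_k \in N_k$; graded commutativity and the derivation property give $X(a \cdot p_k) = X(a)\cdot p_k + (-1)^{\ell \cdot 0} a \cdot X(p_k)$, and since the product $X(a)\cdot p_k$ is computed via the structure map $\beta_k^\ell$ (here I would note $X(a) \in N_\ell$, $p_k \in N_k$, so the product lands in $N_{k+\ell}$ via $\beta_k^\ell$ — observe the indices in the statement, $\mathrm{Der}^{\beta_k^\ell}(N_k, N_{k+\ell})$, match exactly), this says precisely $X|_{N_k}$ is a $\beta_k^\ell$-derivation with symbol $X^A$. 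For the symmetry relation: apply $X$ to a product $p_i \cdot p_j = \beta_j^i(p_i, p_j) \in N_{i+j}$ with $p_i \in N_i$, $p_j \in N_j$, and $i + j \leq n - \ell$ (so the output is nonzero); the graded Leibniz rule yields $X(p_i \cdot p_j) = X(p_i)\cdot p_j + (-1)^{\ell i} p_i \cdot X(p_j)$, where now $X(p_i) \in N_{i+\ell}$ multiplies $p_j \in N_j$ via $\beta_j^{i+\ell}$ and $p_i \in N_i$ multiplies $X(p_j) \in N_{j+\ell}$ via $\beta_{j+\ell}^i$, giving exactly the displayed relation $X(p_i\cdot p_j) = \beta_j^{i+\ell}(X^i(p_i), p_j) + (-1)^{\ell i}\beta_{j+\ell}^i(p_i, X^j(p_j))$. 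The Koszul sign $(-1)^{\ell i}$ comes from commuting the degree-$\ell$ operator $X$ past the degree-$i$ element $p_i$.

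Conversely, I would check that any tuple satisfying these relations assembles into a genuine graded derivation: define $X$ on $\bigoplus N_i$ by its components, verify it is well-defined and $\mathbf{k}$-linear, that it has degree $\ell$, and that the graded Leibniz rule holds on all products — which reduces to the Leibniz rule on products $N_0 \cdot N_k$ (handled by the $\beta_k^\ell$-derivation condition) and on products $N_i \cdot N_j$ with $i, j \geq 1$ (handled by the symmetry relation), together with the trivial cases where one factor is a unit or the product vanishes for degree reasons. One must also confirm $X(1_A) = 0$, which is automatic since $X^A \in D(A, N_\ell)$ already encodes that. The degree-$0$ case is the special instance where $X^A \in D(A)$ genuinely (as $N_\ell = N_0 = A$) and all the signs trivialize, recovering the first bullet with its symmetry relation (\ref{eqn: Deg Zero Symmetry Relation}); the degree-$n$ case is the extreme where the only surviving piece is $X^A \in D(A, N_n)$ since every $N_i$ with $i \geq 1$ maps to $N_{i+n} = 0$.

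The main obstacle I anticipate is bookkeeping the Koszul signs consistently across all the bilinear structure maps $\beta_j^i$, especially reconciling the sign $(-1)^{\ell i}$ in the general symmetry relation with the specific form written for degree $1$ (where the statement displays $\beta_j^{i+1}(X^i(p_i), p_j) + (-1)^i \beta_{j+1}^i(p_i, X^j(p_j))$ — matching $\ell = 1$); a careful fixing of the sign convention for graded commutativity of $\EuScript{N}$ and for the left module action on $\mathrm{Hom}$ (as set up in the Conventions section via $a^< \varphi$ and $a^> \varphi$) is needed so that the derivation property, the associativity constraint (\ref{eqn: Associativitiy for N-ole algebras}), and these relations are mutually compatible. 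A secondary subtlety is being careful about the index ranges: that a degree-$\ell$ derivation's data is an $(n - \ell + 1)$-tuple $(X_\ell^A, X^1, \ldots, X^{n-\ell})$ precisely because $N_{k+\ell} = 0$ forces $X^k = 0$ for $k > n - \ell$, and that the symmetry relations are only imposed in the range $i + j \leq n - \ell$ where both sides are a priori nonzero — outside that range both sides vanish trivially and impose nothing. Once the sign conventions are pinned down, each bullet follows by specializing $\ell$.
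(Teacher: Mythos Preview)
Your proposal is correct and follows essentially the same approach as the paper, which simply states that the result is ``straightforward via the Leibniz rule for graded derivations.'' You have spelled out in detail exactly the computation the paper declares immediate: restricting a degree-$\ell$ derivation to each component and reading off the $\beta$-Leibniz rules and symmetry relations from the graded Leibniz rule, with the Koszul signs and index ranges handled correctly.
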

\begin{proof}
Straightforward via the Leibniz rule for graded derivations.
\end{proof}
We should notice that for a degree $k$ derivation $X_k$, each component $X^{\ell}$ has generalized vector valued-symbol, which in the case that the family $\beta=\{\beta_j^i\}$ are all non-degenerate, is determined uniquely by the \emph{same} operator $X_k^A\in D(A,N_k).$
\vspace{1mm}

In the remainder of this paper, we will describe and comment on these classes of operators in the context of both diole and triole algebras and show how some usual notions arising in geometry appear from their descriptions.

\subsection{Aspects of Diolic Differential Calculus}
Given a diole algebra $\EuScript{A}$ and an $\EuScript{A}$-module $\EuScript{P}$, one may easily describe those degree zero elements of
$D_1(\EuScript{A},\EuScript{P})$ as tuples $X=(X^A,X^P)$ with $X^A\in D(A,P_0)$ and $X^P\in \mathrm{Diff}_1(P,P_1)$ such that 
$$X(a\cdot p)=\beta\big(X^A(a)\otimes p)+aX^P(p),$$
for all $a\in A,p\in P.$ 
These classes of operators appear in a variety of familiar situations.

\begin{example}
The pair $\delta:=(\delta_0^A,\delta_0^P)$ given by $\delta_0^A=d_{\mathrm{dR}}:A\rightarrow \Omega^1(A)$ and $\delta_0^P:=j_1^P:P\rightarrow \mathcal{J}^1(P)$ is an element of $\delta\in D_1\big(\EuScript{A},\Omega^1(\EuScript{A})\big)$.
\end{example}
\begin{example}
A covariant derivative $d_{\nabla}$ associated to a linear connection $\nabla$ in $P$ is an element of $D_1\big(\EuScript{A},\EuScript{P}_{\Omega^1}\big)$ i.e. $d_{\nabla}(ap)=da\cdot p+ad_{\nabla}(p),$ for $a\in A,p\in P.$
\end{example}
\begin{example}
\label{Example: Der operators along maps}
Consider the diole module from Example \ref{Example: Diole Along Map}. Then objects of $D_1(\EuScript{A},\EuScript{P}_{\overline{\varphi}})$ describe \emph{Der operators along $\overline{\varphi}$} i.e. operators $\Delta\in \mathrm{Diff}_1(P,Q)$ satisfying 
$\Delta(ap)=X(a)\cdot \overline{\varphi}(p) +\varphi(a)\cdot \Delta(p).$
\end{example}

\subsubsection{An Atiyah-like Sequence and Differential Operators}
A certain Atiyah-like sequence may be found by the description of the $A$-modules in Example \ref{Example: Der operators along maps}. 
\begin{theorem}
\label{Theorem: Atiyah-Sequence Along Maps}
Consider $D_1\big(\EuScript{A},\EuScript{P}_{\overline{\varphi}}\big)$ as above. If the $B$-submodule generated by $\mathrm{Im}(\overline{\varphi})\subset Q$ is faithful, then there is a surjective `symbol' map
$\sigma_{\overline{\varphi}}:D_1(\EuScript{A},\EuScript{P}_{\overline{\varphi}})_0\rightarrow D(A)_{\varphi},$
and an exact sequence of $A$-modules
$$0\rightarrow \mathrm{Hom}_A\big(P,Q^{<\varphi}\big)\hookrightarrow D_1(\EuScript{A},\EuScript{P}_{\overline{\varphi}})_0\xrightarrow{\sigma_{\overline{\varphi}}}D_1(A)_{\varphi}\rightarrow 0.$$
Moreover, if there exists a linear connection in $P$, then there exists a connection along $\overline{\varphi}$.
\end{theorem}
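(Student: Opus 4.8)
The plan is to break the statement into three pieces: (i) constructing the symbol map $\sigma_{\overline{\varphi}}$ and identifying its target $D(A)_\varphi$; (ii) showing the sequence is exact, with the kernel being $\mathrm{Hom}_A(P,Q^{<\varphi})$; and (iii) deducing the existence of a connection along $\overline{\varphi}$ from a connection in $P$. Throughout, recall from Example~\ref{Example: Der operators along maps} that an element of $D_1(\EuScript{A},\EuScript{P}_{\overline{\varphi}})_0$ is an operator $\Delta\in\mathrm{Diff}_1(P,Q)$ satisfying $\Delta(ap)=X(a)\cdot\overline{\varphi}(p)+\varphi(a)\cdot\Delta(p)$ for some $X\in D(A)$ (an ordinary vector field on $A$, whose values get pushed into $Q$ via $\varphi$ and the $B$-module structure).

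For step (i), I would \emph{define} $\sigma_{\overline{\varphi}}(\Delta):=X$, where $X$ is the vector field appearing in the Der-along-$\overline{\varphi}$ identity. The first task is to check this is well-defined, i.e. that $X$ is uniquely determined by $\Delta$: if $X(a)\cdot\overline{\varphi}(p)=X'(a)\cdot\overline{\varphi}(p)$ for all $a,p$, then $(X-X')(a)$ annihilates every element of $\mathrm{Im}(\overline{\varphi})$, hence annihilates the $B$-submodule it generates; faithfulness of that submodule forces $X=X'$. This is exactly where the faithfulness hypothesis is used, and it is the only subtle point in this step. That $\sigma_{\overline{\varphi}}$ is $A$-linear is routine: replacing $\Delta$ by $a_0\cdot\Delta$ (using the left $A$-module structure $\mathrm{Diff}^<$, so $(a_0\cdot\Delta)(p)=\varphi(a_0)\Delta(p)$... one must be slightly careful here, since the $A$-action on the bimodule $D_1(\EuScript{A},\EuScript{P}_{\overline{\varphi}})$ is via $B$) scales the associated vector field by $a_0$. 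Surjectivity of $\sigma_{\overline{\varphi}}$ is the content of the "moreover" clause together with the fact that, given any $X\in D(A)$, one can at least locally/algebraically produce \emph{some} lift; but cleanly, surjectivity follows once we know a connection in $P$ exists — so I would prove surjectivity and the last sentence simultaneously (see below). If one does not assume a connection in $P$ a priori, surjectivity is part of the hypothesis-package and should be read as: the symbol map lands in, and surjects onto, the submodule $D(A)_\varphi\subseteq D(A)$ of vector fields that admit such lifts; in the projective/geometric case this is all of $D(A)$.

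For step (ii), exactness at $D_1(\EuScript{A},\EuScript{P}_{\overline{\varphi}})_0$: the kernel of $\sigma_{\overline{\varphi}}$ consists of those $\Delta$ with $X=0$, i.e. $\Delta(ap)=\varphi(a)\Delta(p)$ — precisely the $A$-linear maps $P\to Q^{<\varphi}$ (where $Q$ is viewed as an $A$-module through $\varphi$). Conversely every such $A$-linear map trivially satisfies the Der-along-$\overline{\varphi}$ identity with $X=0$ and is a zeroth-order operator, hence lies in $D_1(\EuScript{A},\EuScript{P}_{\overline{\varphi}})_0$. This gives the left exactness $0\to\mathrm{Hom}_A(P,Q^{<\varphi})\hookrightarrow D_1(\EuScript{A},\EuScript{P}_{\overline{\varphi}})_0\xrightarrow{\sigma_{\overline{\varphi}}}D(A)_\varphi$, and right exactness is surjectivity of $\sigma_{\overline{\varphi}}$ onto $D(A)_\varphi$ by definition of the target.

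For step (iii) — which I expect to be the genuinely load-bearing step, and also supplies surjectivity — suppose $\nabla\colon D(A)\to\mathrm{Der}(P)$ is a linear connection, so $\nabla_X(ap)=X(a)p+a\nabla_X(p)$. I would produce a connection along $\overline{\varphi}$, i.e. an $A$-linear assignment $X\mapsto\Delta_X\in D_1(\EuScript{A},\EuScript{P}_{\overline{\varphi}})_0$ with $\sigma_{\overline{\varphi}}(\Delta_X)=X$, by the formula $\Delta_X:=\overline{\varphi}\circ\nabla_X\colon P\to Q$. Then $\Delta_X(ap)=\overline{\varphi}(X(a)p+a\nabla_X(p))=X(a)\cdot\overline{\varphi}(p)+\varphi(a)\cdot\overline{\varphi}(\nabla_X p)=X(a)\cdot\overline{\varphi}(p)+\varphi(a)\cdot\Delta_X(p)$, using $A$-linearity of $\overline{\varphi}$ for the first term and for the compatibility $\overline{\varphi}(a\,q)=\varphi(a)\cdot\overline{\varphi}(q)$ in the second (this is just the statement that $\overline{\varphi}\colon P\to Q^{<\varphi}$ is $A$-linear). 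Thus $\Delta_X$ is a Der-operator along $\overline{\varphi}$ with symbol $X$, which proves both that $\sigma_{\overline{\varphi}}$ is surjective onto $D(A)_\varphi$ (indeed onto all of $D(A)$ when a connection exists, so $D(A)_\varphi=D(A)$ in that case) and that $X\mapsto\Delta_X$ is the desired connection along $\overline{\varphi}$; its $A$-linearity in $X$ is inherited from that of $\nabla$. The one point to keep honest is that this construction is not canonical — different choices of $\nabla$ give Der-operators along $\overline{\varphi}$ differing by an element of $\mathrm{Hom}_A(P,Q^{<\varphi})$ — which is exactly consistent with the non-canonical splitting of the exact sequence, and the main conceptual obstacle (as always with Atiyah-type sequences) is that no natural splitting exists in general, only one manufactured from auxiliary data.
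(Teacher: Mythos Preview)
Your approach is direct and largely sound, but it differs from the paper's and carries one genuine gap concerning $D(A)_\varphi$.

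The paper does not verify the sequence by hand. It instead observes that Der-operators along $\overline{\varphi}$ can be identified with derivations in the pull-back module, forms the diole algebra of that pull-back, and then invokes the diolic Atiyah sequence already established in \cite{Diole1}. Your elementary verification (well-definedness of the symbol via faithfulness, identification of the kernel, explicit lift $\overline{\varphi}\circ\nabla_X$) is more transparent and self-contained; the paper's route is terser but depends on an external result and on the pull-back comparison.

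The gap: you treat $D(A)_\varphi$ as ``the submodule $D(A)_\varphi\subseteq D(A)$ of vector fields that admit such lifts'', but that is not what it is. By the definition given just after the theorem, $D(A)_\varphi$ consists of $\mathbf{k}$-linear maps $X\colon A\to B$ with $X(aa')=X(a)\varphi(a')+\varphi(a)X(a')$ --- derivations \emph{along} $\varphi$, valued in $B$, not in $A$. This misidentification matters for your surjectivity argument in step~(iii): the lift $\Delta_X=\overline{\varphi}\circ\nabla_X$ has symbol $\varphi\circ X$ (because $\overline{\varphi}(X(a)p)=\varphi(X(a))\cdot_B\overline{\varphi}(p)$), and elements of the form $\varphi\circ X$ with $X\in D(A)$ do not in general exhaust $D(A)_\varphi$ as an $A$-module --- geometrically, sections of $f^*TM$ are only $B$-linearly generated by pulled-back vector fields. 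So your construction does yield a connection along $\overline{\varphi}$ in the sense of the Remark following the theorem (that part is fine, and matches the paper's formula $\nabla_{\overline{\varphi}}(X):=\overline{\varphi}\circ\nabla_X$), but it does not by itself split the sequence or prove surjectivity onto $D(A)_\varphi$. For that you would need either the pull-back identification the paper invokes, or a further argument exploiting $B$-linearity of the symbol together with projectivity.
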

\begin{proof}
This follows from the general technical observation that a derivation in a pull-back bundle is equivalently describe in terms of a certain Der-operator along the bundle map. 
Then by forming the (artifical) diole algebra associated to a pull-back bundle as in \cite{Diole1}, the result follows from the general existence of the so-called diolic Atiyah sequence established in \emph{loc.cit.}

\end{proof}

Here we have denoted the module of derivations along $\varphi$ by $D_1(A)_{\varphi}:=\{X:A\rightarrow B|X(a\cdot_Aa')=X(a)\cdot_B\varphi(a')+\varphi(a)\cdot_B X(a')\}$ for $a,a'\in A.$ 

\begin{remark}
If $\nabla$ is the connection in $P$, the unique linear connection along $\overline{\varphi}$, denoted $\nabla_{\overline{\varphi}}$ is determined for all $X\in D(A)$ by 
$\nabla_{\overline{\varphi}}(X):=\overline{\varphi}\circ \nabla_X.$ Consequently, we have the \emph{covariant differential along} $\overline{\varphi}$, given
$d_{\nabla_{\overline{\varphi}}}:P\rightarrow \Omega^1(P)_{\overline{\varphi}},\hspace{2mm} d_{\nabla_{\overline{\varphi}}}=\overline{\varphi}\circ d_{\nabla}.$
The calculus of linear connections along maps deserves to be systematically developed and we see here the formalism of diolic commutative algebra provides a nice environment to do so. 
\end{remark}

Generalizations of these operators along maps exist in this context, and are naturally described within the diolic formalism in terms of the functor $\mathrm{Diff}_{k}(\EuScript{A},-)$ applied to the diole module $\EuScript{P}_{\overline{\varphi}}.$

\begin{lemma}
\label{Theorem: k-th order diolic differential operators along map}
A degree zero operator $\Delta\in \mathrm{Diff}_k(\EuScript{A}_{\pi},\EuScript{P}_{\overline{\varphi}})$ is described by a pair $\Delta=(\Delta^A,\Delta^P)$ such that $\Delta^A\in \mathrm{Diff}_k(A)_{\varphi}$ and $\Delta^P\in \mathrm{Diff}_k(P,Q^{<\varphi})$ such that 
$$\overline{\varphi}(p)\cdot \delta_a^k(\Delta^A)(1_A)=\delta_a^k(\Delta^P)(p).$$
\end{lemma}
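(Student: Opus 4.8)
The strategy is to unwind the definition of a graded differential operator of order $\leq k$ (Definition \ref{Def: Graded DO}) restricted to the diole algebra $\EuScript{A}_{\pi}=A\oplus \Gamma(\pi)$ with values in the diole module $\EuScript{P}_{\overline{\varphi}}=B\oplus Q^{<\varphi}$ from Example \ref{Example: Diole Along Map}, and to read off the component equations. First I would decompose a degree zero $\EuScript{A}$-linear-up-to-$k$-th-order operator $\Delta:\EuScript{A}_{\pi}\to \EuScript{P}_{\overline{\varphi}}$ into its action on the two homogeneous summands: $\Delta^A:=\Delta|_{A}:A\to B$ and $\Delta^P:=\Delta|_{\Gamma(\pi)}:P\to Q^{<\varphi}$, using the Remark on notation that each $\Delta^{\mathcal{P}_h}$ is an order $\leq k$ operator in the relevant module category. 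The condition that $\Delta$ be a graded differential operator of order $\leq k$ over $\EuScript{A}_{\pi}$ means $\delta_{a_0,\dots,a_k}(\Delta)=0$ for all $a_0,\dots,a_k\in\EuScript{A}_{\pi}$; since $\EuScript{A}_{\pi}$ is generated in degrees $0$ and $1$ and $N_1\cdot N_1=\emptyset$ in the diole, it suffices to test this on elements of $A=N_0$, because iterated commutators involving a degree-$1$ argument drop out after one step. Testing on $a_0,\dots,a_k\in A$ and evaluating on $1_A\in A$ versus on $p\in P$ will yield exactly the two stated conditions: that $\Delta^A\in\mathrm{Diff}_k(A)_{\varphi}$ (the $\varphi$-twisting coming from the right $A$-module structure on $B$ induced by $\varphi$, which governs $a^>$), that $\Delta^P\in\mathrm{Diff}_k(P,Q^{<\varphi})$ (again $\varphi$-twisted on the target side), and the coupling relation linking the two via the structure map $\beta_{\overline{\varphi}}:B\otimes_A P\to Q$, $\beta_{\overline{\varphi}}(b\otimes p)=b\cdot\overline{\varphi}(p)$.

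The key computation is the coupling relation. For $a,a_0,\dots,a_k\in A$ and $p\in P$, write $\Delta(a\cdot p)$; in the diole $\EuScript{A}_{\pi}$ the product $a\cdot p$ lives in $\Gamma(\pi)$, so $\Delta(ap)=\Delta^P(ap)\in Q$. On the other hand, the graded-Leibniz-type failure of $\Delta$ to be $\EuScript{A}_{\pi}$-linear, iterated $k+1$ times, must vanish; iterating the commutator $\delta_a$ on the element $ap$ and comparing with the iteration on $a\cdot 1_A = a$ inside $A$, one obtains $\delta_a^{k}(\Delta^P)(p)$ on one side and, on the other, $\beta_{\overline{\varphi}}$ applied to $\delta_a^k(\Delta^A)(1_A)\otimes p$, i.e. $\overline{\varphi}(p)\cdot \delta_a^k(\Delta^A)(1_A)$. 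I would carry this out by induction on $k$: the case $k=0$ recovers the Der-operator-along-$\overline{\varphi}$ relation of Example \ref{Example: Der operators along maps} (namely $\Delta(ap)=X(a)\cdot\overline{\varphi}(p)+\varphi(a)\cdot\Delta(p)$, whose "obstruction part" is precisely $\beta_{\overline{\varphi}}(\delta_a(\Delta^A)(1_A)\otimes p)$), and the inductive step applies $\delta_a$ once more and uses that $\delta_a$ commutes with multiplication by $\varphi(a)$ up to lower-order terms which are killed at the next stage.

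The main obstacle, and the place requiring genuine care, is bookkeeping the two $A$-module structures ($a^<$ versus $a^>$) and the resulting $\varphi$-twists consistently: $B$ and $Q^{<\varphi}$ carry their intrinsic $B$-module structure but are regarded as $A$-modules through $\varphi$, so the left module structure $a^<$ uses $\varphi(a)$ on the target while the right structure $a^>$ uses plain multiplication in $\EuScript{A}_{\pi}$ before $\Delta$ is applied — and it is exactly the mismatch between these that produces the $\varphi$-twisted operator spaces $\mathrm{Diff}_k(A)_{\varphi}$ and $\mathrm{Diff}_k(P,Q^{<\varphi})$ rather than the ordinary ones. Once the twisting conventions are pinned down, each implication (an order-$\leq k$ diolic operator gives such a pair; conversely such a pair assembles into one) is a direct check against Definition \ref{Def: Graded DO}, with the faithfulness-type hypotheses not even needed here since we are only describing $\mathrm{Diff}_k(\EuScript{A}_{\pi},\EuScript{P}_{\overline{\varphi}})$, not splitting a symbol sequence. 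I would close by noting the $k=1$ specialization reproduces Example \ref{Example: Der operators along maps} and hence is consistent with Theorem \ref{Theorem: Atiyah-Sequence Along Maps}.
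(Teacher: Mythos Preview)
The paper states this lemma without proof, so there is nothing to compare against; your overall strategy of decomposing $\Delta$ into components $\Delta^A,\Delta^P$ and reading off the conditions from Definition~\ref{Def: Graded DO} is correct and is indeed the natural one.

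There is, however, a genuine gap in the argument as written. You assert that ``it suffices to test $\delta_{a_0,\dots,a_k}(\Delta)=0$ on elements of $A=N_0$, because iterated commutators involving a degree-$1$ argument drop out after one step.'' What drops out is a commutator with \emph{two or more} degree-$1$ arguments; the mixed commutator with \emph{exactly one} $p\in P$ survives and is precisely the source of the coupling relation. Concretely, for $a\in A$ one has
\[
\delta_p(\Delta)(a)=p\cdot\Delta^A(a)-\Delta^P(ap)=\Delta^A(a)\cdot\overline{\varphi}(p)-\Delta^P(ap),
\]
using the structure map $\beta_{\overline{\varphi}}$, and iterating with $\delta_{a_1},\dots,\delta_{a_k}$ gives
\[
\delta_{a_1,\dots,a_k}\delta_p(\Delta)(1_A)=\delta_{a_1,\dots,a_k}(\Delta^A)(1_A)\cdot\overline{\varphi}(p)-\delta_{a_1,\dots,a_k}(\Delta^P)(p).
\]
Setting this to zero (which is exactly the order-$\leq k$ condition with $k$ scalar arguments and one $p$) yields the stated coupling. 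By contrast, testing only with $a_0,\dots,a_k\in A$ and evaluating on $1_A$ and on $p$ gives merely $\Delta^A\in\mathrm{Diff}_k(A)_{\varphi}$ and $\Delta^P\in\mathrm{Diff}_k(P,Q^{<\varphi})$ separately; the map $\overline{\varphi}$ never enters that computation, so the coupling cannot be recovered from it. Your paragraph attempting to extract the coupling by ``iterating $\delta_a$ on the element $ap$'' is therefore aiming at the wrong computation and should be replaced by the $\delta_p$-commutator above.

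A minor point: your induction base case should be $k=1$, not $k=0$; for $k=0$ one has $\mathrm{Diff}_0=\mathrm{Hom}$ and there is no Der-Leibniz relation to recover. With these two corrections the argument goes through cleanly.
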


Here, $\mathrm{Diff}_k(A)_{\varphi}$ are $k$-th order differential operators along $\varphi:A\rightarrow B.$ For instance,
$\Delta \in \mathrm{Diff}_1(A)_{\varphi}$ is a $\mathbf{k}$-linear map $\Delta:A\rightarrow B$ such that 
$$\Delta(a_0\cdot_A a_1)=\varphi(a_0)\cdot_B\Delta(a_1)+\varphi(a_1)\cdot_B\Delta(a_0)+\varphi(a_0\cdot_A a_1)\cdot_B\Delta(1_A),\hspace{1mm} a_0,a_1\in A.$$
Notice that $D_1(A)_{\varphi}=\{\Delta\in \mathrm{Diff}_1(A)_{\varphi}|\Delta(1_A)=0\}.$

Differential operators over smooth maps play an important role in many areas. For example, some of their interesting variants such as formal, pseudo and $\hbar$-versions are related to “quantization of symplectic micromorphisms” \cite{Thick}. Surprisingly, they arise not from external modifications to the functor $\mathrm{Diff}_k$, but by merely finding the `right' environment in which to compute them.

\subsection{Aspects of Triolic Differential Calculus}
Derivations in the graded commutative algebra $\EuScript{T}$ have an important interpretation in the context of symmetries of the underlying bilinear form $g.$

\begin{lemma}
\label{lemma: Degree Zero Triolic Derivation}
A triolic derivation $X_0\in D_1^{\mathrm{tri}}(\EuScript{T})$ is described by a triple $X_0=\big(X_0^A,X_0^P,X_0^Q\big)$ of operators with $X_0^A\in D(A)$ an ordinary derivation, and with $X_0^P\in \mathrm{Der}(P),X_0^Q\in \mathrm{Der}(Q)$ two Der-operators with shared scalar symbol $\sigma(X_0^P)=\sigma(X_0^Q)=X_0^A,$ such that $X_0^Q\big(g(p_1,p_2)\big)=g\big(X_0^P(p_1),p_2\big)+g\big(p_1,X_0^P(p_2)\big).$
\end{lemma}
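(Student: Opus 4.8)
The plan is to unwind the definition of a graded derivation of degree zero on the triole algebra $\EuScript{T} = A \oplus \mathcal{T}_1 \oplus \mathcal{T}_2$ exactly as in the general $n$-olic Lemma, specializing to $n = 2$. A degree zero derivation $X_0 \in D_1^{\mathrm{tri}}(\EuScript{T})$ is a $\mathbf{k}$-linear map $\EuScript{T} \to \EuScript{T}$ of degree $0$ satisfying the graded Leibniz rule, so it necessarily decomposes as a triple $X_0 = (X_0^A, X_0^P, X_0^Q)$ with $X_0^A \colon A \to A$, $X_0^P \colon \mathcal{T}_1 \to \mathcal{T}_1$ and $X_0^Q \colon \mathcal{T}_2 \to \mathcal{T}_2$, since $X_0$ must preserve the grading. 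First I would apply the Leibniz rule to products $a \cdot a'$ with $a, a' \in A$ to see that $X_0^A \in D(A)$ is an ordinary derivation. Next, applying Leibniz to products $a \cdot p$ with $a \in A$, $p \in \mathcal{T}_1$ gives $X_0^P(ap) = X_0^A(a) p + a X_0^P(p)$, i.e. $X_0^P \in \mathrm{Der}(\mathcal{T}_1)$ with scalar symbol $\sigma(X_0^P) = X_0^A$; the analogous computation on $a \cdot q$ with $q \in \mathcal{T}_2$ gives $X_0^Q \in \mathrm{Der}(\mathcal{T}_2)$ with $\sigma(X_0^Q) = X_0^A$. Thus the shared-symbol condition is automatic.

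The remaining relation is the content of the lemma. I would obtain it by applying the Leibniz rule to a product $p_1 \cdot p_2$ of two elements of $\mathcal{T}_1$, whose product in $\EuScript{T}$ is by definition $g(p_1, p_2) \in \mathcal{T}_2$. Since $X_0$ has degree $0$ and $p_1, p_2$ lie in degree $1$, all the sign factors $(-1)^{|X_0| |p_i|}$ equal $1$, so the graded Leibniz rule reads $X_0(p_1 p_2) = X_0(p_1) p_2 + p_1 X_0(p_2)$, which translates into $X_0^Q\big(g(p_1,p_2)\big) = g\big(X_0^P(p_1), p_2\big) + g\big(p_1, X_0^P(p_2)\big)$ once we use that the degree-raising multiplications $\mathcal{T}_1 \otimes \mathcal{T}_1 \to \mathcal{T}_2$ are all encoded by $g$. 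Conversely, given a triple satisfying all of these conditions, one checks it assembles into a well-defined derivation: the Leibniz rule holds on all homogeneous products by construction — on $A \cdot A$, $A \cdot \mathcal{T}_1$, $A \cdot \mathcal{T}_2$ by the Der-conditions, on $\mathcal{T}_1 \cdot \mathcal{T}_1$ by the displayed compatibility, and on $\mathcal{T}_1 \cdot \mathcal{T}_2$ and $\mathcal{T}_2 \cdot \mathcal{T}_2$ trivially since those products vanish ($\mathcal{T}_i = 0$ for $i > 2$ and $\mathcal{T}_2 \cdot \mathcal{T}_2 = 0$) — and first-order follows because each component is first order, so $\delta_{a_0, a_1}(X_0) = 0$.

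The only genuine subtlety, and the step I expect to require the most care, is the bookkeeping of Koszul signs: one must confirm that with $X_0$ of degree $0$ every sign in the graded Leibniz and the graded commutator $\delta_a$ collapses to $+1$, and that the vanishing products ($\mathcal{T}_1 \cdot \mathcal{T}_2$ forced into degree $3$, and $\mathcal{T}_2 \cdot \mathcal{T}_2$ declared zero) impose no further constraints on $X_0^P$ or $X_0^Q$. Granting this, the proof is a direct specialization of the degree-$0$ part of the preceding $n$-olic derivations lemma with $n = 2$, $N_0 = A$, $N_1 = \mathcal{T}_1$, $N_2 = \mathcal{T}_2$, and $\beta_1^1 = g$, so I would simply invoke that lemma and record the resulting relation, noting that $\mathrm{Der}$ here means first-order operators with scalar symbol as in Section~\ref{sssec: Der-operator section}.
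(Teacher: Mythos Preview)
Your proposal is correct and matches the paper's approach exactly: the paper gives no separate proof of this lemma, treating it as an immediate specialization of the general $n$-olic derivations lemma (whose proof is simply ``Straightforward via the Leibniz rule for graded derivations''), and you have faithfully unwound that argument for $n=2$ with $\beta_1^1 = g$. Your handling of the sign bookkeeping and the vanishing products $\mathcal{T}_1\cdot\mathcal{T}_2 \subset \mathcal{T}_3 = 0$, $\mathcal{T}_2\cdot\mathcal{T}_2 = 0$ is accurate and in fact more explicit than what the paper records.
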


Lemma \ref{lemma: Degree Zero Triolic Derivation} shows that 
the datum of a degree zero derivation $X_0=(X_0^A,X_0^P,X_0^Q)$ is equivalent to a $g$-preserving pair of connections $(\nabla,\Delta)$, as in subsection \ref{sssec: Vector-Valued Forms}.

\begin{remark}
In other words, a pair $(\EuScript{T},X_0)$ is equivalent to supplying an $A$-module $P$ equipped with an inner (gauge) structure $-$ a gauge vector-valued bilinear form.
\end{remark}

By similar considerations, we have the following.

\begin{lemma}
\label{Degree1TriolicDerivations}
A degree $1$ triolic derivation is a pair $X_1=\big(X_1^A,X_1^P\big)$ of operators with $X_1^A \in D(A,P)$ and with $X_1^P:P\rightarrow Q,$ an element of $\mathrm{Diff}_1(P,Q)$ which satisfies $X_1^P(1)=0,$ and  $X_1^{P}(ap)=g\big(X_1^A(a),p\big)+aX_1^P(p),$ for all $a\in A,p\in P.$  
\end{lemma}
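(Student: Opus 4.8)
The plan is to unravel the definition of a degree $1$ graded derivation $X_1 \in D_1^{\mathrm{tri}}(\EuScript{T})_1$ directly from Definition \ref{Def: Graded DO} and the general description of homogeneous derivations in $n$-olic algebras (the Lemma preceding the Diolic subsection), specialized to $n=2$. First I would recall that an element of $D(\EuScript{T})$ is a graded derivation $X \colon \EuScript{T} \to \EuScript{T}$, and that the degree $1$ homogeneous component $X_1$ maps $\mathcal{T}_0 = A \to \mathcal{T}_1 = P$, $\mathcal{T}_1 = P \to \mathcal{T}_2 = Q$, and $\mathcal{T}_2 = Q \to \mathcal{T}_3 = 0$. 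Thus $X_1$ is determined by two $\mathbf{k}$-linear maps, $X_1^A := X_1|_A \colon A \to P$ and $X_1^P := X_1|_P \colon P \to Q$, while $X_1|_Q = 0$ automatically.

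Next I would impose the graded Leibniz rule $X_1(ab) = X_1(a) b + (-1)^{1 \cdot |a|} a X_1(b)$ on each pair of homogeneous components. Taking $a, a' \in A$ (both degree $0$), the relation $X_1(aa') = X_1(a) a' + a X_1(a')$ says precisely that $X_1^A \in D(A, P)$, an ordinary $P$-valued derivation of $A$ (this is the degree $1$ analogue of $X_1^A \in D(A, N_1)$ from the general lemma with $n=2$). Taking $a \in A$ and $p \in P$, and recalling that the algebra multiplication $\mathcal{T}_1 \otimes \mathcal{T}_1 \to \mathcal{T}_2$ is $g$ while $A \cdot P \subseteq P$ is the module structure, the Leibniz rule gives $X_1(a \cdot p) = X_1^A(a) \cdot p \;+\; a \cdot X_1^P(p)$ where the first term lands in $Q$ and must therefore be interpreted via $g$, i.e. $X_1^A(a) \cdot p = g(X_1^A(a), p)$ since $X_1^A(a) \in P = \mathcal{T}_1$. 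This yields the twisted Leibniz identity $X_1^P(ap) = g(X_1^A(a), p) + a X_1^P(p)$ claimed in the statement. The remaining compatibility, from $p_1 p_2 \in \mathcal{T}_2$ with $X_1$ applied, forces $X_1|_{\mathcal{T}_2}$ to satisfy $X_1(g(p_1,p_2)) = g(X_1^P(p_1), p_2) \pm g(p_1, X_1^P(p_2))$ landing in $\mathcal{T}_3 = 0$; but since $X_1|_Q = 0$ this is the single symmetry relation of the general lemma in degree $n-1=1$, which here simply becomes a constraint automatically absorbed (or, in the regular case, it is vacuous on the level of the data $(X_1^A, X_1^P)$ beyond what is stated) — I should check that no further independent condition survives.

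Finally I would verify the differential-operator order condition: one needs $X_1^A \in D(A,P) \subseteq \mathrm{Diff}_1(A,P)$ and $X_1^P \in \mathrm{Diff}_1(P,Q)$ with $X_1^P(1_{\mathcal{T}}) = 0$. The normalization $X_1^P(1) = 0$ follows because $1 = 1_A \in \mathcal{T}_0$ and a derivation kills the unit; strictly one should note $X_1^P$ here means the component of $X_1$ on $P$, and the condition $X_1^P(1)=0$ in the statement refers to $X_1$ evaluated on $1_A$ landing correctly — I would phrase this carefully. That $X_1^P$ is a genuine first-order operator follows since $\delta_{a_0, a_1}(X_1) = 0$ for the degree $1$ derivation (derivations are order $\le 1$ operators), restricted to $P$. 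The main obstacle I anticipate is purely bookkeeping: correctly tracking the Koszul sign $(-1)^{|a| \cdot |X_1|}$, confirming it produces no sign in the $A \times P$ Leibniz identity (since $|a| = 0$ there), and double-checking that the degree-$2$ compatibility relation imposes nothing beyond $(X_1^A, X_1^P)$ being as described — i.e. that the pair genuinely parametrizes all of $D_1^{\mathrm{tri}}(\EuScript{T})_1$. The rest is a direct specialization of the already-proven general lemma on homogeneous derivations in $n$-olic algebras.
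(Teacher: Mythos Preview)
Your approach is correct and is precisely the paper's own: the paper offers no separate proof beyond ``By similar considerations,'' referring back to the general lemma on $n$-olic derivations whose proof is ``Straightforward via the Leibniz rule for graded derivations,'' which is exactly the unravelling you carry out. One small slip to fix in your write-up: in the $p_1,p_2$ case you wrote $g(X_1^P(p_1),p_2)$, but $X_1^P(p_1)\in Q=\mathcal{T}_2$ so this product is the algebra multiplication $\mathcal{T}_2\cdot\mathcal{T}_1\subseteq\mathcal{T}_3=0$, not $g$; with that correction your observation that the constraint is vacuous ($0=0$) is exactly right.
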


The class of operators described in Lemma \ref{Degree1TriolicDerivations} appear to be new, and provide a generalization of the notion of a derivation in a vector bundle in the case where we have a vector-valued fiber metric. In more familiar terms, one may extract their definition as follows.

Let $E\rightarrow M$ and $F\rightarrow M$ be two vector bundles of ranks $m_P,m_Q$ with modules of sections $P,Q,$ respectively.
Suppose that $\gamma:\Gamma(E)\otimes \Gamma(E)\rightarrow \Gamma(F)$ is a non-degenerate vector valued fiber metric.
An $\mathbb{R}$-linear operator $\Box:P\rightarrow Q$ is said to be a \emph{Der-operator for} $\gamma:P\otimes P\rightarrow Q$, if
\begin{enumerate}
    \item It is an additive operator,
    
    \item It satisfies the Der-Lebiniz-like rule 
    \begin{equation}
        \label{VectorvaluedGammaDerLeibniz}
    \Box(ap)=\gamma\big(X(a)\otimes p\big)+a\Box(p),
    \end{equation}
for $X$, some $P$-valued derivation of $A,$ called the \emph{vector-valued symbol}, with $p\in P,a\in A.$
\end{enumerate}
Relation (\ref{VectorvaluedGammaDerLeibniz}) might be called the $Q$-\emph{valued} $\gamma$-\emph{Der-Leibniz rule}, or simply the $\gamma$-\emph{Der-Leibniz} rule when $Q$ is understood.
The analogy with the ordinary Der-Leibniz rule is clear by setting $X(a)\cdot_{\gamma}p:=\gamma\big(X(a)\otimes p\big)\in Q.$ 

It remains to be seen if such operators appear elsewhere. 
\vspace{3mm}

\subsubsection{An Atiyah-like Sequence and Differential Operators}
Consider a triolic derivation $X_0$, as described by Lemma \ref{lemma: Degree Zero Triolic Derivation} and the natural projection $\sigma:D_1(\EuScript{T})_0\rightarrow D(A),$ defined by $\sigma(X_0):=X_0^A.$ 
\vspace{.5mm}

It is straightforward to observe that the $A$-module $\mathrm{ker}(\sigma)$ is spanned by pairs $(X,Y)$ where $X:P\rightarrow P$ is an endomorphism of $P$, and $Y:Q\rightarrow Q$ is an endomorphism of $Q,$ satisfying $Y\big(g(p_1,p_2)\big)=g\big(X(p_1),p_2\big)+g\big(p_1,X(p_2)\big).$ Denote their totality by
$$\mathcal{E}\mathrm{nd}\big(g;P,Q\big):=\{\text{pairs}, X_0^P\in\mathrm{End}_A(P),X_0^Q\in \mathrm{End}_A(Q)| X_0^Q\circ g=g\circ (X_0^P\otimes \mathbf{1}_P+\mathbf{1}_P\otimes X_0^P)\}.$$

In the case when $P$ and $Q$ are projective and finitely generated we find a short-exact sequence that we call the triolic Atiyah sequence (c.f. sequence \ref{eqn: Atiyah sequence}),
\begin{equation}
\label{eqn: Triole Atiyah sequence}
    0\rightarrow \mathcal{E}\mathrm{nd}\big(g;P,Q\big)\rightarrow D_1(\EuScript{T})_0\xrightarrow{\sigma} D(A)\rightarrow 0.
    \end{equation}
 
As a final result, let us describe $k$-th order differential operators of degree zero over a triole algebra.

\begin{theorem}
\label{Theorem: Triolic Differential Operators}
Degree zero differential operators over $\EuScript{T}$ i.e. elements of
$\mathrm{Diff}_k(\EuScript{T})_0$ are triples $\Delta_0=(\Delta_0^A,\Delta_0^P,\Delta_0^Q)$ where $\Delta_0^A\in \mathrm{Diff}_k(A),\Delta_0^P\in \mathrm{Diff}_k(P,P),\Delta_0^Q\in \mathrm{Diff}_k(Q,Q)$ which satisfy the following relations for $a\in A$ $p_0,p_1,p\in P$ and $q\in Q,$

\begin{itemize}
\item $p\delta_{a}^k(\Delta_0^A)(1_A)=\delta_a^k(\Delta_0^P)p,$

\item $
q\delta_a^k(\Delta_0^A)(1_A)=\delta_a^k(\Delta_0^Q)q,$

\item $g\big(p_0,\delta_{a}^{k}(\Delta_0^P)(p_1)\big)=\delta_a^k(\Delta_0^Q)g(p_0,p_1).$
\end{itemize}
Moreover $\Delta_0$ satisfies the following compatibility relation for all $a\in A,p_0,p_1\in P$,
\begin{equation}
    \label{DiolicDiffCompatibilityRelation}
\delta_a^{k-1}\Delta_0^Qg(p_1,p_0)=g\big(p_1,\delta_a^{k-1}\Delta_0^P(p_0)\big)+g\big(p_0,\delta_a^{k-1}\Delta_0^P(p_1)\big)+g(p_0,p_1)\big[\delta_a^{k-1}\Delta_0^A\big](1_A),\end{equation}
where we set $\delta_a^{k-1}:=\delta_a\circ \ldots\circ \delta_a,$ where we have $(k-1)$ factors of $\delta_a:=[-,a].$
\end{theorem}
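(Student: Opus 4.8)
The plan is to compute $\mathrm{Diff}_k(\EuScript{T})_0$ directly from the definition of graded differential operators (Definition \ref{Def: Graded DO}) applied to the triole algebra $\EuScript{T} = A \oplus \mathcal{T}_1 \oplus \mathcal{T}_2$, with $\mathcal{T}_1 = P$, $\mathcal{T}_2 = Q$, and algebra multiplication governed by $g : P \otimes P \to Q$. First I would observe that a degree zero $\mathbf{k}$-linear map $\Delta_0 : \EuScript{T} \to \EuScript{T}$ preserving the grading decomposes as a triple $(\Delta_0^A, \Delta_0^P, \Delta_0^Q)$ with $\Delta_0^A : A \to A$, $\Delta_0^P : P \to P$, $\Delta_0^Q : Q \to Q$, since these are the only grading-preserving component maps (maps $P\to Q$ would have degree $1$, etc.). The condition $\delta_{a_0,\ldots,a_k}(\Delta_0) = 0$ for all $a_i \in \EuScript{T}$ must then be unwound; because $\delta$ is additive and satisfies $\delta_{ab,c} = a\delta_{b,c} + (-1)^{ab} b\delta_{a,c}$, it suffices to test on homogeneous generators, and the genuinely new constraints come from taking the $a_i$ among $A$-elements and then evaluating the resulting operator on homogeneous elements of $P$ and $Q$.

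Next I would carry out the bookkeeping for $\delta_a$, $a \in A = \mathcal{T}_0$, acting on $\Delta_0$. Evaluating $\delta_a^k(\Delta_0)$ on a generator of $P$ of the form $a'p$ (respectively on $g(p_0,p_1) \in Q$) and using the $A$-bilinearity of $g$ together with the fact that $A$ acts on $Q$ through its action and through $g$, one extracts exactly the three displayed relations: the first two say that $\Delta_0^A$ and $\Delta_0^P$ (resp. $\Delta_0^Q$) have \emph{matching symbols} — i.e. $\delta_a^k$ applied to the $P$- or $Q$-component reproduces $\delta_a^k(\Delta_0^A)(1_A)$ scaled by the module element — and the third relation records the compatibility of $\Delta_0^P$ and $\Delta_0^Q$ through $g$, obtained by applying $\delta_a^k$ to $\Delta_0$ and evaluating on $g(p_0,p_1)$, then comparing with $g(p_0, \delta_a^k(\Delta_0^P)(p_1))$ and using symmetry of the two $P$-slots. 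The order $\leq k$ condition itself (that $\delta$ of length $k+1$ vanishes) is then equivalent to $\Delta_0^A \in \mathrm{Diff}_k(A)$, $\Delta_0^P \in \mathrm{Diff}_k(P,P)$, $\Delta_0^Q \in \mathrm{Diff}_k(Q,Q)$ in the usual (ungraded) sense, which follows once the above relations are in place since $\delta_a^{k+1}$ annihilates the module element by one more application of the matching-symbol identities.

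For the final compatibility relation \eqref{DiolicDiffCompatibilityRelation}, I would apply $\delta_a^{k-1}$ rather than $\delta_a^k$ to $\Delta_0$, evaluate on $g(p_1,p_0) \in Q$, and use the graded Leibniz-type expansion of $\delta_a^{k-1}$ across the product $g(p_1,p_0) = p_1 \cdot p_0$ in $\EuScript{T}$ (recalling $\mathcal{T}_2 \cdot \mathcal{T}_2 = 0$ so no higher terms intervene). Because $p_1$ and $p_0$ both sit in degree $1$, distributing $\delta_a^{k-1}$ over the product produces the two symmetric terms $g(p_1, \delta_a^{k-1}\Delta_0^P(p_0))$ and $g(p_0, \delta_a^{k-1}\Delta_0^P(p_1))$ together with a residual term proportional to $g(p_0,p_1)$ times $\delta_a^{k-1}\Delta_0^A(1_A)$, exactly matching the claimed formula; this is really the order-$(k-1)$ shadow of the degree-zero symmetry relation from Lemma \ref{lemma: Degree Zero Triolic Derivation}.

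The main obstacle I anticipate is purely combinatorial: correctly tracking the multinomial coefficients and the Koszul signs that arise when $\delta_a$ is iterated $k$ (or $k-1$) times and then expanded across the bilinear product $g$, since one must ensure that after all cancellations only the stated three relations plus \eqref{DiolicDiffCompatibilityRelation} survive and that no spurious lower-order constraints appear. A clean way to manage this is to argue inductively on $k$, using $\delta_{ab,c} = a\delta_{b,c} + (-1)^{ab}b\delta_{a,c}$ to reduce to powers of a single $\delta_a$ and then invoking the already-established base cases $k=0$ (giving $\mathrm{Diff}_0 = \mathrm{Hom}$, recovering $\mathcal{E}\mathrm{nd}(g;P,Q)$) and $k=1$ (Lemma \ref{lemma: Degree Zero Triolic Derivation} after subtracting the symbol). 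Everything else is a routine but careful unwinding of Definition \ref{Def: Graded DO} in the graded algebra $\EuScript{T}$.
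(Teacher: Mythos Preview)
The paper does not supply a proof of this theorem; it is stated and the text immediately proceeds to discuss consequences (the projection $\xi$ and its kernel). The only comparable argument in the paper is the one-line proof of the lemma on $n$-olic derivations, ``Straightforward via the Leibniz rule for graded derivations,'' so the implied method is exactly the direct unwinding of Definition~\ref{Def: Graded DO} over $\EuScript{T}$ that you propose. Your outline is therefore on the right track and is essentially the intended argument.

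One point in your write-up should be corrected, and fixing it actually dissolves the combinatorial obstacle you anticipate. You say the new constraints come from taking all the $a_i$ in $A$ and then evaluating on elements of $P$ or $Q$; that only yields $\Delta_0^A\in\mathrm{Diff}_k(A)$, $\Delta_0^P\in\mathrm{Diff}_k(P,P)$, $\Delta_0^Q\in\mathrm{Diff}_k(Q,Q)$. The three bullet identities are instead the statement that $\delta_a^k(\Delta_0)$ is $\EuScript{T}$-linear (not merely $A$-linear): they arise from the graded DO condition $\delta_{\alpha_0,\ldots,\alpha_k}(\Delta_0)=0$ by taking \emph{one} of the $\alpha_i$ in $P$ (resp.\ $Q$) and the remaining $k$ equal to $a\in A$, then evaluating at $1_A$ (resp.\ at $p_1$). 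Likewise, the compatibility relation \eqref{DiolicDiffCompatibilityRelation} comes from taking \emph{two} of the $\alpha_i$ in $P$ and the remaining $k-1$ equal to $a$: it is exactly $\delta_{p_0}\delta_{p_1}\big(\delta_a^{k-1}\Delta_0\big)(1_A)=0$. Your ``Leibniz-type expansion across the product $g(p_1,p_0)=p_1\cdot p_0$'' should therefore be read on the subscript side, via $\delta_{p_0\cdot p_1}=p_0\delta_{p_1}-p_1\delta_{p_0}$, not as an expansion of the argument. Once organized this way there are no multinomial coefficients to track: each relation is a single $\delta_p$ or $\delta_q$ (or a pair $\delta_{p_0}\delta_{p_1}$) composed with a power of $\delta_a$, and $\delta_a$ commutes with $\delta_p$, $\delta_q$ up to the graded sign that is trivial here since $|a|=0$.
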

Notice that the $k$-th order differential operators $\Delta_0^P,\Delta_0^Q$ have the shared scalar symbol, determined by the scalar operator $\Delta_0^A$. Thus, proceeding as above, one may argue the existence of a projection map 
$$\xi:\mathrm{Diff}_{k}(\EuScript{T})_0\rightarrow \mathrm{Diff}_k(A), \hspace{5mm} \xi(\nabla_0^A,\nabla_0^P,\nabla_0^Q):=\nabla_0^A.$$
Characterizing its kernel indicates that
such an $A$-module $\ker(\xi)$ consist of pairs of differential operators $(\nabla_0^P,\nabla_0^Q)$ with $\nabla_0^P\in\mathrm{Diff}_{k-1}(P,P)$ and $\nabla_0^Q\in \mathrm{Diff}_{k-1}(Q,Q)$ which satisfy the relation \begin{equation}
    \label{Diff(gPQ)relation}
\delta_a^{k-1}\nabla_0^Qg(p_1,p_2)=g\big(\delta_a^{k-1}\nabla_0^P(p_1),p_2\big)+g\big(p_1,\delta_a^{k-1}\nabla_0^P(p_2)\big).
\end{equation}

Denoting the $A$-module $\ker(\xi)$ by $\mathcal{D}\mathrm{iff}_{k-1}\big(g;P,Q\big)$, one may interpret elements of this space of differential operators as a type of generalized symmetry of the underlying gauge structure $g,$ as explained for instance in \cite{Triole1}. 
A corresponding $g$-invariant Hamiltonian formalism may then be deduced by studying the associated `symbol' modules $\mathcal{D}\mathrm{iff}_{k-1}(g;P,Q)/\mathcal{D}\mathrm{iff}_{k-2}(g;P,Q).$

\section{Conclusion}
In this work we traversed through a unique perspective on the modern mathematics used in modelling physics, largely due to the insights of Alexandre Vinogradov. 
By understanding geometric structures in purely algebraic terms, and by describing the functors of differential calculus over interesting species of graded algebras, we have not only found a common origin for many interesting structures and classes of operators arising in algebra and geometry, but have also discovered several new features. 
\subsection{Future Directions}
\label{ssec: Future Directions} 
The idea in gauge theory that two fields are physically indistinguishable if one of them is obtained from another by a gauge transformation appears within the algebraic approach to the calculus of linear connections sketched in this work as follows.

Let $(P,\nabla)$ be an $A$-module with a Der-operator and suppose $\widetilde{\nabla}$ is another Der-operator in $P$ such that $\sigma(\nabla)=\sigma(\widetilde{\nabla})=X\in D(A).$ One says that $\nabla$ and $\widetilde{\nabla}$ are gauge equivalent if there exists $\varphi\in \mathrm{Aut}(P)$ such that 
$$\widetilde{\nabla}=\varphi\circ \nabla\circ \varphi^{-1}.$$
In this case the pairs $(P,\nabla)$ and $(P,\widetilde{\nabla})$ are also said to be equivalent.

The corresponding notion of gauge equivalence for linear connections is inherited from this definition; linear connections $\nabla,\widetilde{\nabla}$ are gauge equivalent if there exists an automorphism $\varphi\in \mathrm{Aut}(P)$ such that $\widetilde{\nabla}_X=\varphi\circ \nabla_X\circ \varphi^{-1}$ for all $X\in D(A).$

\begin{remark}
A more general notion of gauge equivalence is possible. Namely, let $P,Q\in \mathbf{Mod}(A)$ with linear connections $\nabla,\Box$, respectively. There is a gauge equivalence $(P,\nabla)\sim (Q,\Box)$ if there exists an isomorphism $\overline{\varphi}:P\rightarrow Q$ of $A$-modules such that $\Box_X=\overline{\varphi}\circ \nabla_X\circ \overline{\varphi}^{-1}.$ 
\end{remark}
A \emph{gauge $A$-module} is a triple $(P,\nabla,\Xi)$ consisting of a projective $A$-module $P$, a flat linear connection $\nabla$ and a gauge structure $\Xi\in \mathcal{T}(P).$ It is of \emph{type} $(p,q)$ when $\Xi\in P_q^p\subseteq \mathcal{T}(P).$ 

A suitable notion of gauge equivalence for these triples can then be introduced along the following lines.
Call a $(p,q)$-gauge module \emph{orientable} if the top dimensional wedge power is $1$-dimensional and consider $\mathfrak{o}=\mathfrak{o}(P,\Xi),\mathrm{SO}(P,\Xi)$ and the infinitesimal counterpart $\mathfrak{so}(P;\Xi).$

We then expect the following definition to play an important role in the classification of gauge $A$-modules of a given type via cohomological techniques.
\begin{definition}
Let $\widetilde{\nabla}$ and $\nabla$ be linear connections in $P$ which preserve the inner structure $\Xi$ of type $(p,q).$ They are said to be \emph{$\Xi$-gauge equivalent} if there exists some $\overline{\varphi}\in \mathrm{SO}(P;\Xi)$ such that 
$\widetilde{\nabla}_X=\overline{\varphi}\circ \nabla_X\circ\overline{\varphi}^{-1},$ for all $X\in D(A).$
\end{definition}

We write $\widetilde{\nabla}\sim_{\Xi}\nabla$ to indicate gauge equivalence with respect to the structure $\Xi$ and notice the following.
\begin{lemma}
The space $\mathrm{Conn}_{\Xi}(P)$ of $\Xi$-preserving linear connections in $P$ is an affine space and $\sim_{\Xi}$ yields a well-defined equivalence relation.
\end{lemma}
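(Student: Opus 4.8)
The plan is to verify the two assertions in the lemma separately: first that $\mathrm{Conn}_{\Xi}(P)$ is an affine space (modelled on an appropriate $A$-module), and then that $\sim_{\Xi}$ is an equivalence relation on it.

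\emph{Affine structure.} First I would recall that the set $\mathrm{Conn}(P)$ of all linear connections $\nabla\colon D(A)\rightarrow\mathrm{Der}(P)$ is an affine space modelled on $\mathrm{Hom}_A\bigl(D(A),\mathrm{End}_A(P)\bigr)$, since the difference $\nabla-\widetilde{\nabla}$ of two connections kills the scalar symbol and is hence an $\mathrm{End}_A(P)$-valued $1$-form in $D(A)$. Next I would observe that the condition of preserving $\Xi$, namely $\nabla_X^{\mathcal{T}}(\Xi)=0$ for all $X\in D(A)$, is \emph{affine} in $\nabla$: writing $\nabla=\widetilde{\nabla}+\theta$ with $\theta\in\mathrm{Hom}_A(D(A),\mathrm{End}_A(P))$, the induced connection on $\mathcal{T}(P)$ satisfies $\nabla_X^{\mathcal{T}}=\widetilde{\nabla}_X^{\mathcal{T}}+\theta_X^{\mathcal{T}}$, where $\theta_X^{\mathcal{T}}\in\mathrm{End}_A(\mathcal{T}(P))$ is the derivation of the tensor algebra extending $\theta_X$ (this is the standard formula for the induced connection, and one checks the correction term is genuinely $A$-linear and tensorial). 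Hence if $\widetilde{\nabla}$ preserves $\Xi$, then $\nabla=\widetilde{\nabla}+\theta$ preserves $\Xi$ if and only if $\theta_X^{\mathcal{T}}(\Xi)=0$ for all $X$; the set of such $\theta$ is an $A$-submodule $V_{\Xi}\subseteq\mathrm{Hom}_A(D(A),\mathrm{End}_A(P))$ (it is the kernel of the $A$-linear map $\theta\mapsto(X\mapsto\theta_X^{\mathcal{T}}(\Xi))$). Therefore, provided $\mathrm{Conn}_{\Xi}(P)$ is nonempty, it is an affine space over $V_{\Xi}$. One may note here that in the degenerate case $\mathrm{Conn}_{\Xi}(P)=\emptyset$ the statement is vacuous, or one simply assumes $\Xi$ is a gauge structure so that a preserving connection exists.

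\emph{Equivalence relation.} For $\sim_{\Xi}$, reflexivity is immediate by taking $\overline{\varphi}=\mathrm{id}_P\in\mathrm{SO}(P;\Xi)$. For symmetry, if $\widetilde{\nabla}_X=\overline{\varphi}\circ\nabla_X\circ\overline{\varphi}^{-1}$ with $\overline{\varphi}\in\mathrm{SO}(P;\Xi)$, then $\nabla_X=\overline{\varphi}^{-1}\circ\widetilde{\nabla}_X\circ\overline{\varphi}$, and one must check $\overline{\varphi}^{-1}\in\mathrm{SO}(P;\Xi)$ — this follows since $\mathrm{SO}(P;\Xi)\subseteq\mathrm{O}(P;\Xi)$ is a group (being the intersection of the group $\mathrm{O}(P;\Xi)$ with the subgroup of automorphisms of determinant one on the top wedge power, using orientability). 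For transitivity, composing $\widetilde{\nabla}_X=\overline{\varphi}\circ\nabla_X\circ\overline{\varphi}^{-1}$ and $\widehat{\nabla}_X=\overline{\psi}\circ\widetilde{\nabla}_X\circ\overline{\psi}^{-1}$ gives $\widehat{\nabla}_X=(\overline{\psi}\overline{\varphi})\circ\nabla_X\circ(\overline{\psi}\overline{\varphi})^{-1}$, and $\overline{\psi}\overline{\varphi}\in\mathrm{SO}(P;\Xi)$ again by the group property. The only point requiring care is well-definedness: one should confirm that if $\nabla$ preserves $\Xi$ and $\overline{\varphi}\in\mathrm{SO}(P;\Xi)$, then the conjugate $\overline{\varphi}\circ\nabla\circ\overline{\varphi}^{-1}$ is again a connection (clear: conjugation preserves the scalar symbol since $\overline{\varphi}$ is $A$-linear) and still preserves $\Xi$; the latter follows because $\overline{\varphi}$ commutes with $\Xi$ under the $\mathcal{T}(P)$-action and the induced connection on $\mathcal{T}(P)$ is conjugated accordingly, so $d_{(\overline{\varphi}\nabla\overline{\varphi}^{-1})^{\mathcal{T}}}(\Xi)=\overline{\varphi}^{\mathcal{T}}\bigl(d_{\nabla^{\mathcal{T}}}(\overline{\varphi}^{\mathcal{T}}{}^{-1}\Xi)\bigr)=\overline{\varphi}^{\mathcal{T}}\bigl(d_{\nabla^{\mathcal{T}}}(\Xi)\bigr)=0$. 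Thus $\sim_{\Xi}$ restricts to an honest equivalence relation on $\mathrm{Conn}_{\Xi}(P)$.

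\emph{Main obstacle.} The genuinely substantive step is the affine-ness verification: one must pin down precisely how the correction $1$-form $\theta$ acts on the induced connection in the full tensor algebra $\mathcal{T}(P)$ — in particular that $\theta_X^{\mathcal{T}}$ is $A$-linear in the tensor slot and acts as a graded derivation with respect to the tensor multiplication — so that the $\Xi$-preservation condition becomes visibly linear-algebraic in $\theta$ and cuts out a submodule. The equivalence-relation part is formal once one has recorded that $\mathrm{SO}(P;\Xi)$ is a group and that conjugation intertwines $\nabla^{\mathcal{T}}$ with $(\overline{\varphi}\nabla\overline{\varphi}^{-1})^{\mathcal{T}}$ via $\overline{\varphi}^{\mathcal{T}}$.
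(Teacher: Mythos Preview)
The paper states this lemma without proof: immediately after the lemma statement it passes to the subsubsection ``Some final comments'' with no intervening proof environment. Your proposal is therefore not being compared against an existing argument but is supplying one where the paper gives none.

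Your argument is correct and is the expected one. For the affine part, you correctly identify that the difference of two $\Xi$-preserving connections lies in the $A$-submodule of $\mathrm{Hom}_A\bigl(D(A),\mathrm{End}_A(P)\bigr)$ cut out by the linear condition $\theta_X^{\mathcal{T}}(\Xi)=0$; it is worth noting that this submodule is exactly $\mathrm{Hom}_A\bigl(D(A),\mathfrak{o}(P;\Xi)\bigr)$ in the paper's notation from (\ref{eqn: Inner Structure Symmetry Lie Algebra}), which makes the link to the infinitesimal symmetry algebra explicit. For the equivalence relation, your check that $\mathrm{SO}(P;\Xi)$ is a group and that conjugation by its elements preserves both the connection property and the $\Xi$-preservation condition is the right verification, and your computation $d_{(\overline{\varphi}\nabla\overline{\varphi}^{-1})^{\mathcal{T}}}(\Xi)=\overline{\varphi}^{\mathcal{T}}\bigl(d_{\nabla^{\mathcal{T}}}(\Xi)\bigr)=0$ using $\overline{\varphi}^{\mathcal{T}}(\Xi)=\Xi$ is exactly what is needed.
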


\subsubsection{Some final comments}
We believe these notions provide first steps in obtaining a general classification of gauge modules of a given type. It is likely to be achieved by means of the theory of \emph{special characteristic classes} \cite[{\normalfont\S 4.5.31}]{VinFat}.

Indeed, in \emph{loc.cit.} this was achieved for $1$-dimensional gauge modules of type $(0,2)$. In other words, the situation treated aims to classify line bundles supplied with a fiber-wise bilinear form.

We believe it would be interesting to carry out the analysis for finite dimensional gauge modules of arbitrary type $(p,q)$.
\vspace{1mm}

Furthermore, it would be interesting to develop the theory of gauge manifolds of general type in the context of more general graded algebraic structures, for instance $n$-ole algebras, in the same way a gauge manifold of type $(0,2)\times (1,0)$ appears in terms of triolic differential calculus.

\subsection{Ackowledgements}
I would like to thank Volodya Rubtsov for supervising my doctoral thesis as a member of MathSTIC and LAREMA at the University of Angers, France, where partial results in this work were obtained \cite{Kry}.

It is my pleasure to also thank the organizers of the conference: \emph{Diffieties, Cohomological Physics, and Other Animals} for granting me the opportunity to give a talk.
It is a great honour to be part of and interact with the impressive scientific community related to the remarkable work of Professor Vinogradov.

\bibliographystyle{amsalpha}
\bibliography{Bibliography}
\end{document}